\documentclass[
11pt, % The default document font size, options: 10pt, 11pt, 12pt
english, % ngerman for German
singlespacing, % Single line spacing, alternatives: onehalfspacing or doublespacing
parskip, % Uncomment to add space between paragraphs
headsepline, % Uncomment to get a line under the header
]{amsart} % The class file specifying the document structure

\usepackage[utf8]{inputenc} % Required for inputting international characters
\usepackage[T1]{fontenc} % Output font encoding for international characters
\usepackage{amsthm} % Allows for theorem styles, particularly allows for a different style for definitions
\usepackage{mathpazo} % Use the Palatino font by default
\usepackage{mathtools} % for the hooked arrow
\usepackage{amssymb} % for the surjection arrow
\usepackage{tikz} % for diagrams
\usepackage{caption}
\usepackage{float}
\usepackage{cases}
\usepackage{oubraces}

\usepackage{enumerate}
\usepackage{pb-diagram}
\usepackage{microtype}
\usepackage{amsmath}
\usepackage{amssymb,pb-diagram}
\usepackage[all]{xy}
\usepackage{graphicx} 
\usepackage{mathabx}
\usepackage{epsfig}
\usepackage{amsfonts}
\usepackage{amsthm}
\usepackage{color}
\usepackage{latexsym}
\usepackage[capitalise]{cleveref}

\usepackage[backend=bibtex,style=alphabetic,natbib=true]{biblatex} % Use the bibtex backend with the authoryear citation style (which resembles APA)

\usepackage[autostyle=true]{csquotes} % Required to generate language-dependent quotes in the bibliography
\newtheorem{theorem}{Theorem}[section]
\newtheorem{citing}[theorem]{Citation}
\newtheorem{lemma}[theorem]{Lemma}

\newtheorem{corollary}[theorem]{Corollary}
\newtheorem{conjecture}[theorem]{Conjecture}

\theoremstyle{definition}
\newtheorem{definition}[theorem]{Definition}
\newtheorem{example}[theorem]{Example}

\newcommand{\TF}{\textbf{F}}
\newcommand{\FN}[1]{$\textbf{F}_{#1}$}
\newcommand{\drawcaret}[2]{\draw (#1,#2) -- (#1-0.5,#2-1); \draw (#1,#2) -- (#1+0.5,#2-1)}

\begin{document}
\title[Tree Pairs for Algebraic Bieri-Strebel Groups]{Tree Pairs for Algebraic Bieri-Strebel Groups}

\author[L.~Molyneux]{Lewis Molyneux}
\address{
Royal Holloway, University of London\\ 
Department of Mathematics, McCrea Building, TW20~0EX Egham, UK}

\begin{abstract}
We reintroduce a previously discovered method for constructing tree pair representations for Algebraic Bieri-Strebel groups, as well as demonstrate a class of higher order groups that cannot have a tree pair representation. In doing so, we demonstrate that there is no maximum degree such that for all polynomials of higher degree, the associated Algebraic Bieri Strebel group must have a tree-pair representation.
\end{abstract}

\maketitle

\section{Introduction}

Tree pair representations. are of distinct importance in the study of Thompson-like groups, particularly when it comes to research into finiteness properties. Even when just considering Thompson's group $\TF$, the method of expressing elements as equivalence classes of ordered tree pairs is integral when determing that $\TF$ has the $F_\infty$ property \cite{Brown} \cite{ZarAccount}, calculating an infinite and finite presentation \cite{CFP} \cite{Fbook}, calculating the BNSR invariant \cite{Witzel}, and even demonstrating that $\TF$ acts without fixed point on a $CAT(0)$ cube complex \cite{Farley}. This approach has been similarly fruitful with other members of the family of Thompson-like groups. Not only is it possible to construct such a representation for such diverse groups as Stein's group \cite{Stein}, the Lodha-Moore groups \cite{LMbook}, and even Brin's group 2V \cite{2V}, and has been useful for calculating properties among many of these groups \cite{F23}, \cite{Zaremsky}, \cite{Wladis}.

Bieri-Strebel groups are a class of groups that arise from a generalisation of Thompson's group $\TF$. While Thompson's group is typically understood as the group of piecewise linear, orientation preserving homeomorphisms of $[0,1]$, with slopes in $\langle 2 \rangle$ and breakpoints in $\mathbb{Z}[\frac{1}{2}] =\{ \frac{a}{2^b} | a,b \in \mathbb{N}\}$, Bieri and Strebel's construction allows us to generalise many aspects of this initial group. Given an interval of the real line $I$, a multiplicative group of real numbers $P$ and $A$ a $\mathbb{Z}P$ submodule of the additive group of real numbers, we can define $G(I,A,P)$ as the group of piecewise linear, orientation preserving homeomorphisms of $I$ with slopes in $P$ and breakpoints in $A$ \cite{BSgroups}. This is an extremely large class of groups, and aside from work determining general properties for Bieri-Strebel groups in general, the bulk of research into this class of groups has been focused on the subclass of groups expressible as $G([0,1], \mathbb{Z}[\beta], \langle \beta \rangle)$, where $\beta$ is some algebraic number in $\mathbb{R}$. Note that this class includes $\TF$ as $G([0,1], \mathbb{Z}[\frac{1}{2}], \langle 2 \rangle)$, but also each of the Brown-Thompson groups \FN{n} as $G([0,1], \mathbb{Z}[\frac{1}{n}], \langle n \rangle)$.

Research into the class of algebraic Bieri-Strebel groups began in earnest with Cleary in \cite{poset} and \cite{Cleary}, but it was not until Burillo, Nucinkis and Reeves \cite{Ftau} that a tree-pair representation was discovered for an algebraic Bieri-Strebel group with a non-linear associated algebraic number. However, this paper was focused on the group \FN{\tau}, and did not provide a general tree-pair representation for any class of algebraic Bieri-Strebel groups. Winstone \cite{Winstone} was able to generalise this work, developing tree pairs for a large class of algebraic Bieri-Strebel groups with quadratic algebraic numbers. However, Winstone also discovered a class of algebraic Bieri-Strebel groups that cannot have a tree-pair representation. This is highly unusual for a class of Thompson-like groups, and invalidates many methods mathematicians would often use to research such groups. Thompson-like groups share many similar properties, particularly finiteness properties. For example, for many Thompson-like groups that have a known BNSR invariant, including Thompson's group $\TF$ \cite{SigmaF}, the Brown-Thompson groups \FN{n} \cite{Fn} \cite{Zaremsky}, Stein's group \FN{2,3} \cite{F23} and \FN{\tau} \cite{Mine}, the BNSR invariant has a very similar structure, Similarly, we generally expect Thompson-like groups to have the $F_\infty$ property. In many cases, these properties can be derived from the tree pair representation. In the case of Thompson-like groups without a tree pair representation, very little can be easily deduced about their properties. In the specific case of Bieri-Strebel groups without tree-pair representations, all that is really known is results regarding Bieri-Strebel groups in general, such as Tanner's finite generation result \cite{Tanner}. It is possible that these groups have wildly different properties compared to other Thompson-like groups. 

This paper expands the class of algebraic Bieri-Strebel groups that cannot have a tree pair representation. Specifically, it proves the following theorem.

\begin{theorem}
The algebraic Bieri-Strebel group $G([0.1], \mathbb{Z}[\beta], \langle \beta \rangle)$, where $\beta$ is the root of the polynomial $ax^{2n}+bx^n-1$ with $0 < \beta <1$, cannot have a tree-pair representation.
\end{theorem}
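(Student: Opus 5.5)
The plan is to show that every possible caret in a putative tree-pair representation is forced to split intervals only into pieces whose lengths are powers of $\beta^n$. Then everything a tree pair can produce lives in the smaller structure $\mathbb{Z}[\beta^n]$, $\langle\beta^n\rangle$, which cannot give all of $G([0,1],\mathbb{Z}[\beta],\langle\beta\rangle)$. I will use the same notion of tree-pair representation as Winstone. There are finitely many caret types, each of which subdivides an interval of length $\ell$ into consecutive subintervals of lengths $\ell\beta^{k_1},\dots,\ell\beta^{k_m}$. Elements are the maps sending the leaves of one tree affinely onto the leaves of another. Every caret type therefore records a relation
\[
\sum_{i=1}^m \beta^{k_i}=1, \qquad k_i\ge 1 .
\]
I would first check, as Winstone does in the quadratic case, that this reduction is forced. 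Since $0<\beta<1$, leaf lengths must be powers of $\beta$ so that leaf-to-leaf maps have slopes in $\langle\beta\rangle$, and the relation comes from the root caret having total length $1$.

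The key step is a Galois argument. Write $f(x)=ax^{2n}+bx^n-1$ and let $\zeta$ be a primitive $n$-th root of unity. Since $f(\zeta x)=f(x)$, the numbers $\zeta^j\beta$ are all roots of $f$. Assuming $f$ is irreducible over $\mathbb{Q}$ (I would take this as part of the hypotheses, or else justify it as in Winstone's quadratic case), the map $\beta\mapsto\zeta\beta$ extends to a field embedding $\sigma$ of $\mathbb{Q}(\beta)$ into $\mathbb{C}$. Applying $\sigma$ to a caret relation gives $\sum_i \zeta^{k_i}\beta^{k_i}=1$. The triangle inequality then yields
\[
1=\Bigl|\sum_i \zeta^{k_i}\beta^{k_i}\Bigr|\le\sum_i \beta^{k_i}=1,
\]
so equality holds. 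Because all the $\beta^{k_i}$ are positive reals, every $\zeta^{k_i}$ has the same argument, and since their weighted sum is $1$, each $\zeta^{k_i}=1$. Hence $n\mid k_i$ for every $i$ and every caret type.

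Consequently every leaf of every tree has length in $\langle\beta^n\rangle$, and every subdivision point lies in $\mathbb{Z}[\beta^n]$, by induction on the number of carets. Every element represented by a tree pair then has slopes in $\langle\beta^n\rangle$ and breakpoints in $\mathbb{Z}[\beta^n]$. To finish, I would exhibit an element of $G([0,1],\mathbb{Z}[\beta],\langle\beta\rangle)$ with a slope equal to $\beta$, for instance a map with breakpoints at $0$, at a small element of $\mathbb{Z}[\beta]$, and at one further suitably chosen point. The slope $\beta$ is not in $\langle\beta^n\rangle$ for $n\ge 2$, since $\beta^{1-nk}=1$ is impossible for $0<\beta<1$. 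So the tree pairs cannot represent the whole group.

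The main obstacle I anticipate is making the definition of ``tree-pair representation'' rigid enough that the caret relations really are of the form $\sum\beta^{k_i}=1$ with positive powers. If more general carets are allowed (for example, splittings with lengths in $\mathbb{Z}_{>0}[\beta]$, or leaves with non-power lengths), I would adapt the inequality step: any relation $\sum c_i\beta^{k_i}=1$ with $c_i>0$ still forces $n\mid k_i$ by the same triangle-inequality argument. I would also need to double-check irreducibility of $f$, or at least that $\zeta\beta$ is a Galois conjugate of $\beta$ for some nontrivial $n$-th root of unity $\zeta$, which is all the argument actually uses.
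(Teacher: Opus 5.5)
Your argument is correct once the hypothesis you flag is in place, and it takes a genuinely different route from the paper. However, the irreducibility of $ax^{2n}+bx^n-1$ cannot be justified in general. Some hypothesis of this kind has to be added, because the statement is false without it.

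For example, $12x^4+x^2-1=(4x^2-1)(3x^2+1)$ has root $\beta=1/2$, so the group is $\mathbf{F}$. Likewise $x^6+4x^3-1=(x^2+x-1)(x^4-x^3+2x^2+x+1)$ has root $\beta=\tau$, so the group is $\mathbf{F}_\tau$. Both groups have tree pairs in exactly your sense, with caret relations $\beta+\beta=1$ and $\beta+\beta^2=1$, whose exponents are not divisible by $n$.

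What your Galois step really uses is $\beta\notin K:=\mathbb{Q}(\beta^n)$. Under that condition some embedding of $\mathbb{Q}(\beta)$ fixing $K$ sends $\beta\mapsto\omega\beta$ with $\omega^n=1\neq\omega$. Your triangle-inequality argument then gives $d\mid k_i$, where $d\geq 2$ is the order of $\omega$, and $\beta\notin\langle\beta^d\rangle$ finishes the proof. The condition holds for every even $n$ whenever $b^2+4a$ is not a perfect square: $\beta^n$ then has norm $-1/a<0$ in the quadratic field $K$, so it is not a square there. The paper's proof also needs such a restriction without stating it: Lemma~\ref{rootexclude} uses irrationality of the root of $ax^2+bx-1$, which fails for $12x^2+x-1$.

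For your final step, a concrete element is the map with slopes $\beta,\beta^{-1},1$ on $[0,p]$, $[p,(1+\beta)p]$, $[(1+\beta)p,1]$, where $p=\beta^m$ with $m$ large. Its breakpoints $p$ and $(1+\beta)p$ lie in $\mathbb{Z}[\beta]$.

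On the comparison, the paper's route has three parts:
\begin{itemize}
\item It works with the caret system read off from the polynomial.
\item Through the leg-rescaling map $i^*$ (Lemma~\ref{treepairdrop}), it shows that the tree pairs for $P(x^k)$ are a copy of those for $P(x)$.
\item It separates the groups by an element with a breakpoint at $\sqrt{\beta}\notin\mathbb{Z}[\beta]$, where its $\beta$ is the root of $ax^2+bx-1$.
\end{itemize}
Other caret systems are addressed only through the matrix-positivity computation of Example~\ref{rootfind}. That computation is worked out for $ax^4+bx^2-1$ and is never turned into a statement about arbitrary carets.

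Your Galois/triangle-inequality step replaces both the isomorphism and the positivity computation at once. It constrains every possible caret relation $\sum_i\beta^{k_i}=1$ for every $n\geq 2$. Your obstruction is also a slope rather than a breakpoint, so no lemma about square roots is needed.

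Under the narrow reading, where tree pairs are built from the given polynomial with legs of lengths $n$ and $2n$, your last two steps alone prove the theorem with no extra hypothesis. The Galois step, together with the hypothesis above, is exactly what the broad reading needs, and the broad reading is the one the paper aims at in its final paragraph. What the paper's route adds is the observation that the polynomial-based tree pairs for $P(x^k)$ form an isomorphic copy of the tree-pair group for $P(x)$.
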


This class contains algebraic Bieri-Strebel groups with associated algebraic number of arbitrarily high degree, making it distinct from Winstone's class of exclusively quadratic Bieri-Strebel groups.

\section{Thompson's group}

\begin{definition}\label{Fdef}
Thompson's group $\TF$  is the group of piecewise linear, orientation preserving homeomorphisms of the unit interval $I = [0,1]$ such that (\cite{Fbook}, 1.1.1)
\begin{itemize}
\item the slopes of each linear segment are in the multiplicative group $\langle 2 \rangle$
\item the breakpoints, or points between the slopes, fall in $I \bigcap \mathbb{Z}[\frac{1}{2}] = \{\frac{a}{2^b} \in I| a, b \in \mathbb{N}\}$
\end{itemize}
\end{definition}

This group, along with the related groups \textbf{T} and \textbf{V}, were originally studied by Richard Thompson in 1965. $\TF$ has been shown to have the $F_\infty$ property, and is therefore finitely presented. The finite presentation can be seen in (\cite{Fbook}, 2.4.1), but is not useful for our purposes. It is more common to work with the infinite presentation(\cite{Fbook}, 2.1.1)

\begin{equation}\label{Fpres}
\textbf{F} = \langle x_0, x_1, x_2, \dots | x_j x_i = x_i x_{j+1}, j>i \rangle
\end{equation}

Many properties, including this infinite presentation, are derived from a property of $\TF$ known as the tree pair presentation

\subsection{Tree Pairs for \TF}\label{treepair}

Considering each element of $\TF$ as a function mapping $I$ to $I$, we can see that each such function, constrained by the definition \ref{Fdef}, can be defined solely by the location of its breakpoints, as the function is just the linear segments that connect each breakpoint. Each breakpoint of the function consists of a value on the domain interval, and a value on the codomain interval. Elements of $\TF$ are orientation preserving homeomorphisms of $I = [0,1]$, so this requires that for any $f \in TF$, $f(0) = (0) and f(1) = 1$. For a given element of $\TF$, we can take the value of each breakpoint on the domain and form a set $\{x_0,\dots,x_n\}$, where $x_0 = 0$, $x_1 = 1$, and $j>i \Rightarrow x_j > x_i$. This is what Cannon, Floyd and Parry refer to as a partition of the interval \cite{CFP}, section 2).

Given that all our breakpoints sit in $I \bigcap \mathbb{Z}[\frac{1}{2}]$, we can calculate the lengths of the intervals between them and express them in the form $\frac{a}{2^b}$, $a, b \in \mathbb{N}$. We can then refine this interval partition by introducing redundant breakpoints, or breakpoints where the function doesn't change gradient, allowing us to rewrite any interval of length $\frac{a}{2^b}$ as $a$ intervals of length $\frac{1}{2^b}$. This creates what Cannon, Floyd and Parry refer to as a dyadic partition. We may perform this refinement on both the domain interval partition $\{x_{d,0}, x_{d,1}, \dots, x_{d,n}\}$, and the codomain interval partition $\{x_{c,0}, x_{c,1}, \dots, x_{c,n}\}$. We can then consider the element $f \in \TF$ as the function that maps the domain interval $[x_{d,i}, x_{d,i+1}]$ to the codomain interval $[x_{c,i}, x_{c,i+1}]$ for $0 \leq i \leq n-1$. This produces the interval partition representation for elements of $\TF$.

\begin{center}
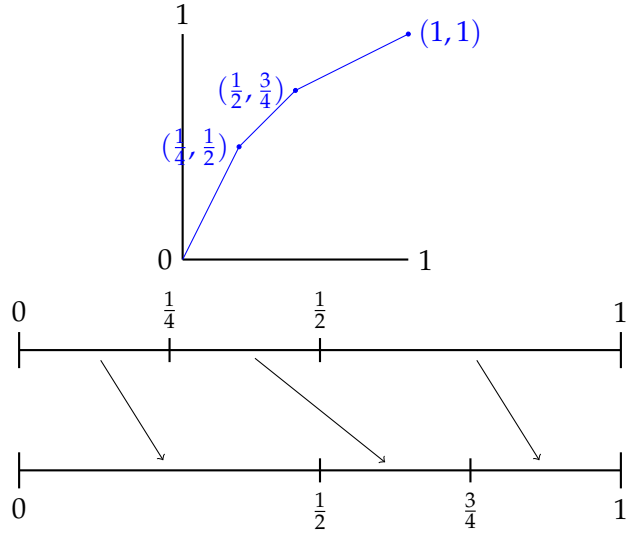
\begin{figure}[H]

\begin{tikzpicture}[scale=3]
\draw[thick](0,0) -- (0,1);
\draw[thick](0,0) -- (1,0);
\draw[blue](0,0) -- (0.25,0.5);
\draw[blue](0.25,0.5) -- (0.5,0.75);
\draw[blue](0.5,0.75) -- (1,1);
\filldraw[blue] (0.25,0.5) circle (0.25pt) node[anchor=east]{$(\frac{1}{4},\frac{1}{2})$};
\filldraw[blue] (0.5,0.75) circle (0.25pt) node[anchor=east]{$(\frac{1}{2},\frac{3}{4})$};
\filldraw[blue] (1,1) circle (0.25pt) node[anchor=west]{$(1,1)$};
\filldraw (0,0) circle (0pt) node[anchor=east]{$0$};
\filldraw (1,0) circle (0pt) node[anchor=west]{$1$};
\filldraw (0,1) circle (0pt) node[anchor=south]{$1$};
\end{tikzpicture}
\hspace{10mm}
\begin{tikzpicture}[scale=8]
    \draw[thick] (1,0) -- (0,0);
    \draw[thick] (0,-0.2) -- (1, -0.2);
    \draw[thick] (0, 0.03) -- (0, -0.03);
    \draw[thick] (1,-0.03) -- (1, 0.03);
    \draw[thick] (0, -0.23) -- (0,-0.17);
    \draw[thick] (1, -0.23) -- (1, -0.17);
    \draw[thick] (0.25, 0.02) -- (0.25,-0.02);
    \draw[thick] (0.5, 0.02) -- (0.5, -0.02);
    \draw[thick] (0.5, -0.22) -- (0.5, -0.18);
    \draw[thick] (0.75, -0.22) -- (0.75, -0.18);
    \filldraw (0,0.03) circle (0pt) node[anchor=south]{$0$};
    \filldraw (1,0.03) circle (0pt) node[anchor=south]{$1$};
    \filldraw (0,-0.23) circle (0pt) node[anchor=north]{$0$};
    \filldraw (1,-0.23) circle (0pt) node[anchor=north]{$1$};
    \filldraw (0.25,0.02) circle (0pt) node[anchor=south]{$\frac{1}{4}$};
    \filldraw (0.5,0.02) circle (0pt) node[anchor=south]{$\frac{1}{2}$};
    \filldraw (0.5,-0.22) circle (0pt) node[anchor=north]{$\frac{1}{2}$};
    \filldraw (0.75,-0.22) circle (0pt) node[anchor=north]{$\frac{3}{4}$};
    \node (A) at (0.125,0) {};
    \node (B) at (0.375,0) {};
    \node (C) at (0.75,0) {};
    \node (D) at (0.25,-0.2) {};
    \node (E) at (0.625,-0.2) {};
    \node (F) at (0.875,-0.2) {};
    \draw[->](A)--(D);
    \draw[->](B)--(E);
    \draw[->](C)--(F);

\end{tikzpicture}

\caption{The same element of $\TF$, expressed as both a function $f:I \rightarrow I$ and as a pair of partitions.}\label{x01}
\end{figure}
\end{center}

We can convert an interval partition representation into a tree pair representation by taking each dyadic partition of the interval, and converting it into a rooted binary tree in the following way

\begin{itemize}
\item each node of the tree represents a standard dyadic interval (ie: an interval of length $\frac{1}{2^a}$, $a \in \mathbb{Z}_{\geq 0}$).
\item the root node of a tree represents the interval $I$.
\item For a node with corresponding interval $[\frac{a}{2^b}, \frac{a+1}{2^b}]$, the two nodes directly beneath it represent $[\frac{2a}{2^b},\frac{2a+1}{2^{b+1}}]$ (on the left) and $[\frac{2a+1}{2^{b+1}},\frac{a+1}{2^b}]$ (on the right).
\end{itemize} 

Thus a leaf (a node with no nodes beneath it) of depth $n$ represents an interval of length $\frac{1}{2^n}$ in the interval partition, where we consider the root to have depth $0$, and any other node has depth equal to the number of edges on the unique path between it and the root. 

Just as two interval partitions can represent a homeomorphism, two rooted binary trees can represent an element of $\TF$ by considering that the interval represented by the $n$th leaf (counting left to right) of the leftmost tree is mapped linearly to the $n$th leaf of the rightmost tree. 

\begin{center}
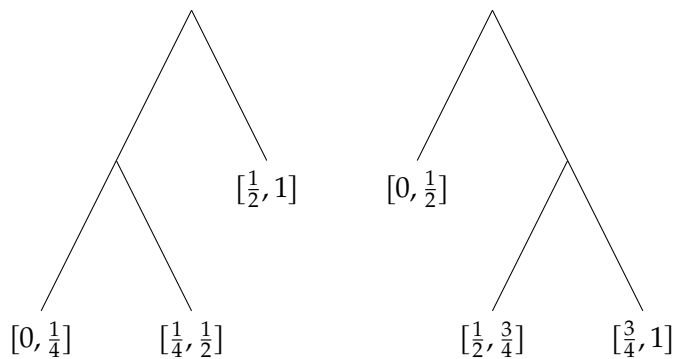
\begin{figure}[H]
\centering
\begin{tikzpicture}[scale=2]
\drawcaret{0}{0};
\drawcaret{2}{0};
\drawcaret{-0.5}{-1};
\drawcaret{2.5}{-1};
\filldraw (-1,-2) circle (0pt) node[anchor=north]{$[0,\frac{1}{4}]$};
\filldraw (0,-2) circle (0pt) node[anchor=north]{$[\frac{1}{4},\frac{1}{2}]$};
\filldraw (0.5,-1) circle (0pt) node[anchor=north]{$[\frac{1}{2},1]$};
\filldraw (1.5,-1) circle (0pt) node[anchor=north]{$[0,\frac{1}{2}]$};
\filldraw (2,-2) circle (0pt) node[anchor=north]{$[\frac{1}{2},\frac{3}{4}]$};
\filldraw (3,-2) circle (0pt) node[anchor=north]{$[\frac{3}{4},1]$};

\end{tikzpicture}
\caption{The element from \ref{x01}, now presented as a tree-pair. Note how the nodes in the trees are positioned similarly to the breakpoints in the partitions.}
\end{figure}
\end{center}

We can turn this representation of individual group elements into a representation for the whole group with two important steps. The first is to impose an equivalence relation on the set of tree pairs such that there is exactly one equivalence class for each element of $\TF$, and the second is to define a binary relation between equivalence classes of tree pairs that is equivalent to function composition as the binary relation in $\TF$.

The first step is to recognise that the insertion of a redundant breakpoint in the interval partition representation of an element is reflected by the addition of a pair of redundant carets into the tree pair representation. Just as a redundant breakpoint splits a dyadic interval and the interval it's mapped to in half, a pair of redundant carets splits a leaf on the leftmost tree and the corresponding leaf on the rightmost tree by adding a caret to each of them. (\cite{Fbook}, page 14). As redundant breakpoints don't change the underlying function, then neither do pairs of redundant carets. As such, we can form an equivalance relation on the set of tree pairs such that two tree pairs are related if one is obtainable from another by any finite sequence of additions or removals of pairs of redundant carets. 

In each equivalence class under this relation, we can produce at least one "reduced" tree pair diagram, that contains no redundant carets. Indeed, this reduced diagram is unique within that equivalence class as shown in (\cite{Fbook}, 2.3.5).

\begin{center}
\begin{figure}[H]
\centering
\begin{tikzpicture}[scale=1.2]
\drawcaret{0}{0};
\drawcaret{-0.5}{-1};
\draw (-1,-2) -- (-1.4,-3);
\draw (-1,-2) -- (-0.6,-3);
\drawcaret{2}{0};
\drawcaret{2.5}{-1};
\drawcaret{3}{-2};
\draw[red] (0,-2) -- (-0.4,-3);
\draw[red] (0,-2) -- (0.4,-3);
\draw[red] (2.5,-3) -- (2,-4);
\draw[red] (2.5,-3) -- (3,-4);

\drawcaret{6}{0};
\drawcaret{5.5}{-1};
\draw (5,-2) -- (4.6,-3);
\draw (5,-2) -- (5.4,-3);
\drawcaret{8}{0};
\drawcaret{8.5}{-1};
\drawcaret{9}{-2};

\filldraw (-1.4,-3) circle (0pt) node[anchor=north]{$1$};
\filldraw (-0.6,-3) circle (0pt) node[anchor=north east]{$2$};
\filldraw (-0.4,-3) circle (0pt) node[anchor=north west]{$3$};
\filldraw (0.4,-3) circle (0pt) node[anchor=north]{$4$};
\filldraw (0.5,-1) circle (0pt) node[anchor=north]{$5$};

\filldraw (1.5,-1) circle (0pt) node[anchor=north]{$1$};
\filldraw (2,-2) circle (0pt) node[anchor=north]{$2$};
\filldraw (2,-4) circle (0pt) node[anchor=north]{$3$};
\filldraw (3,-4) circle (0pt) node[anchor=north]{$4$};
\filldraw (3.5,-3) circle (0pt) node[anchor=north]{$5$};
\end{tikzpicture}
\caption{A tree-pair diagram with a redundant caret, highlighted in red, and the equivalent reduced diagram.}
\end{figure}
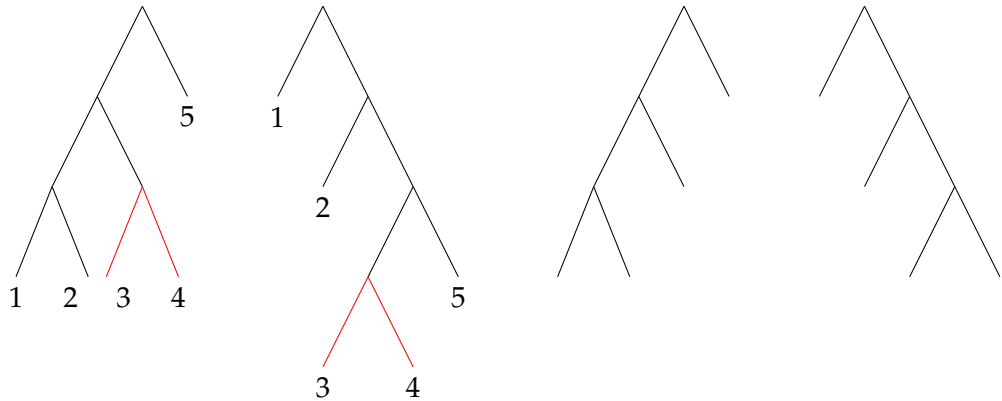
\end{center}

We utilise this equivalence class in building our binary operation. Considering two elements of $\TF$ represented as two reduced tree pairs, we can find the tree pair representing the composition of these elements by adding redundant carets to each tree pair until the rightmost tree of the first tree pair is identical to the leftmost tree of the second tree pair. The tree pair formed of the leftmost tree of the first tree pair and the rightmost tree of the second tree pair is in the equivalence class of tree pairs representing the composition of our original two elements, though it is not necessarily the reduced representative of that equivalence class (\cite{Fbook}, section 1.3)

\begin{center}
\begin{figure}[H]
\centering
\begin{tikzpicture}[scale=0.8]
\drawcaret{0}{0};
\drawcaret{-0.5}{-1};
\drawcaret{-1}{-2};
\drawcaret{2}{0};
\draw (1.5,-1) -- (1.1,-2);
\draw (1.5,-1) -- (1.9,-2);
\draw (2.5,-1) -- (2.1,-2);
\draw (2.5,-1) -- (2.9,-2);
\drawcaret{5}{0};
\draw (4.5,-1) -- (4.1, -2);
\draw (4.5,-1) -- (4.9, -2);
\drawcaret{4.9}{-2};
\drawcaret{7}{0};
\drawcaret{7.5}{-1};
\drawcaret{8}{-2};

\filldraw (0.75,-3.2) circle (0pt) node[anchor=north]{$f$};
\filldraw (6.25,-3.2) circle (0pt) node[anchor=north]{$f'$};

\drawcaret{0}{-4};
\drawcaret{-0.5}{-5};
\drawcaret{-1}{-6};
\draw[red] (-0.5,-7) -- (-1,-8);
\draw[red] (-0.5,-7) -- (0,-8);
\drawcaret{2}{-4};
\draw (1.5,-5) -- (1.1,-6);
\draw (1.5,-5) -- (1.9,-6);
\draw[red] (1.9,-6) -- (1.4, -7);
\draw[red] (1.9,-6) -- (2.4, -7);
\draw (2.5,-5) -- (2.1,-6);
\draw (2.5,-5) -- (2.9,-6);
\drawcaret{5}{-4};
\draw[red] (5.5,-5) -- (5.1, -6);
\draw[red] (5.5,-5) -- (5.9, -6);
\draw (4.5,-5) -- (4.1, -6);
\draw (4.5,-5) -- (4.9, -6);
\drawcaret{4.9}{-6};
\drawcaret{7}{-4};
\drawcaret{7.5}{-5};
\drawcaret{8}{-6};
\draw[red] (8.5,-7) -- (9,-8);
\draw[red] (8.5,-7) -- (8, -8);

\drawcaret{2}{-9};
\drawcaret{1.5}{-10};
\drawcaret{1}{-11};
\draw[red] (1.5,-12) -- (1,-13);
\draw[red] (1.5,-12) -- (2,-13);
\drawcaret{5}{-9};
\drawcaret{5.5}{-10};
\drawcaret{6}{-11};
\draw[red] (6.5,-12) -- (7,-13);
\draw[red] (6.5,-12) -- (6, -13);

\filldraw (3.5,-13.2) circle (0pt) node[anchor=north]{$f*f'$};

\end{tikzpicture}
\caption{Demonstration of composition of tree-pairs.}
\end{figure}
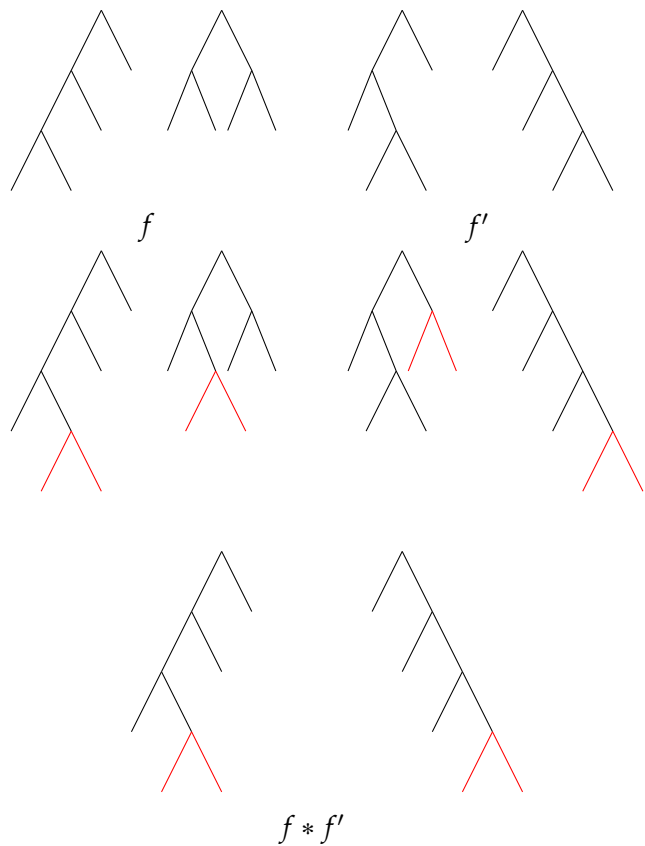
\end{center}

\subsection{\FN{n}.}\label{fn}
While there are many possible ways to generalise Thompson's group $\TF$, one that provides a useful middle ground between Thompson's group and the Bieri-Strebel groups is the family of Brown-Thompson groups \FN{n}. Many concepts from Brown-Thompson groups will carry over to Algebraic Bieri-Strebel groups and many of the concepts we have discussed and shall discuss for $\TF$ carry over to the \FN{n} groups. It is therefore prudent to define and discuss the Brown-Thompson groups before proceeding to the Bieri-Strebel Groups.

\begin{definition}
For $n \in \mathbb{N}, n \geq 2$, the Brown-Thompson group \FN{n} is the group of piecewise-linear, orientation preserving homeomorphisms of the unit interval $[0,1]$ (commonly written as $I$) such that:

\begin{itemize}
\item All gradients are in $\langle n \rangle$.
\item There are finitely many breakpoints between the slopes.
\item All breakpoints fall in $I \bigcap \mathbb{Z}[\frac{1}{n}]$.
\end{itemize}

\end{definition}

Evidently, the Brown-Thompson groups are a natural generalisation of Thompson's group. It can be seen that Thompson's group $\TF$ is the same as the Brown-Thompson group \FN{2}. The Brown-Thompson groups have a similar presentation to $\TF$.

\begin{equation}\label{FNpres}
\textbf{F}_n = \langle x_0, x_1, ..., | x_j x_i = x_i x_{j+n-1} \textsf{ for } j>i \rangle
\end{equation}

Similar to $\TF$, the Brown-Thompson groups \FN{n} can be represented with both partition pairs and tree-pair diagrams. The primary difference between tree-pairs representing different Brown-Thompson groups is the shape of the carets. In $\TF$, each time we subdivide an interval, we split it into $2$ equal pieces. This concept generalises to \FN{n}, where we subdivide each interval into $N$ equal pieces. This means that, in the tree pair diagrams for \FN{n}, each caret has $n$ legs. That is to say that each node either has $0$ direct descendants (and is therefore a leaf), or $n$ direct descendants.
\par
Other than this difference, tree-pairs function identically in \FN{n} to how they function in $\TF$. Redundant carets may be detected in the same way. We still form the same equivalence classes of tree-pairs, and tree-pair composition functions in the exact same way. As such, many properties for $\TF$ derived from tree-pair diagrams can be analogised to \FN{n}, even if those properties are not the same. The presentation in \ref{FNpres} is a good example of this, as it is similar to the presentation for $\TF$, but changes as $n$ changes. 

\section{Bieri-Strebel Groups}

Thompson's groups has been generalised in multiple different ways, with some notable examples being Stein's groups \cite{Stein} and Brin's group \cite{Brin2}. Our chosen method of generalisation was created by Bieri and Strebel, and allows for flexibility in the interval for the homeomorphisms, the slopes of the homeomorphisms, and where the breakpoints fall. 

\begin{definition}\label{BSgroups}
(\cite{BSgroups}, page ii) For an interval of the real numbers $I$, a multiplicative subgroup of the group of positive real numbers $P$ and a $\mathbb{Z}[P]$ submodule of the real numbers $A$, we define the Bieri-Strebel group $G(I,A,P)$ as the group of piecewise-linear, orientation preserving homeomorphisms of $I$ such that:

\begin{itemize}
\item All gradients are in $P$.
\item There are finitely many breakpoints between the slopes.
\item All breakpoints fall in $I \bigcap A$.
\end{itemize}
\end{definition}

Thompson's group $\TF$ is the group $G([0,1],\mathbb{Z}[\frac{1}{2}], \langle 2 \rangle)$. We can also define Brown's generalisation of Thompson's group (\cite{Brown}) \FN{n} as the group $G([0,1],\mathbb{Z}[\frac{1}{n}], \langle n \rangle)$.
\subsection{Algebraic Numbers and Subdivision Polynomials} \label{subdivide}

As Bieri-Strebel groups are very general in their basic definition, we consider a smaller class of groups. We call this class the class of Algebraic Bieri-Strebel groups.

\begin{definition}\label{algdef}
For a positive algebraic number $\beta$, the algebraic Bieri-Strebel group \FN{\beta} is the Bieri-Strebel group $G([0,1], \mathbb{Z}[\beta], \langle \beta \rangle)$.
\end{definition} 

Algebraic Bieri-Strebel groups are particularly useful to study as groups of partition pairs, as discussed in \ref{treepair}. This is due to algebraic numbers association with polynomials, which we may use to build partitions.

\begin{definition}
A subdivision polynomial is a polynomial of the form $a_n x^n + a_{n-1} x^{n-1} +...+ a_1 x -1$, where $a_i$ are all in $\mathbb{Z}_{\geq 0}$.
\end{definition}

\begin{lemma}
(\cite{Winstone}, Lemma 2.2.1) Any nontrivial subdivision polynomial $P(x)$ has exactly one real root greater than $0$, and that root lies between $0$ and $1$.
\end{lemma}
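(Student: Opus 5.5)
We prove the lemma by elementary real analysis of $P$ on $[0,\infty)$, writing $P(x)=Q(x)-1$ with $Q(x)=a_nx^n+\dots+a_1x$. Nontriviality means that at least one coefficient $a_i$ with $i\geq 1$ is a positive integer; otherwise $P\equiv -1$, which has no roots. So $Q$ is a nonzero polynomial with nonnegative coefficients and zero constant term.

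\textbf{Existence.} First, $P(0)=-1<0$. Second, every $a_i$ is a nonnegative integer and some $a_k\geq 1$, so
\[
P(1)=\sum_{i=1}^n a_i-1\geq 0 .
\]
If $P(1)=0$, then $x=1$ is itself a positive root. Otherwise $P(1)>0$, and the intermediate value theorem gives a root in $(0,1)$. In either case a positive root exists in $(0,1]$.

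\textbf{Uniqueness.} On $(0,\infty)$ we have
\[
Q'(x)=\sum_{i=1}^n i\,a_i x^{i-1}>0,
\]
because every term is nonnegative and the term with $a_k\geq 1$ is strictly positive. Hence $P=Q-1$ is strictly increasing on $[0,\infty)$ and has at most one positive root. Combined with existence, $P$ has exactly one positive real root, and it lies in $(0,1]$.

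\textbf{The endpoint $1$.} The root equals $1$ exactly when $\sum a_i=1$, that is, when $P(x)=x^k-1$ for a single $k$. So the root lies strictly between $0$ and $1$ precisely when $P\neq x^k-1$. The lemma's phrase ``between $0$ and $1$'' should therefore be read as the closed interval $(0,1]$, or else ``nontrivial'' should be understood to exclude the polynomials $x^k-1$, which give $\beta=1$ and hence $\langle \beta\rangle$ trivial.

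The argument is routine, and this endpoint convention is the only point needing care. Strict monotonicity follows directly from the nonnegativity of the coefficients.
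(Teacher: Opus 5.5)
Your proof is correct and follows the same route as the paper: existence from $P(0)=-1$, $P(1)>0$ and the intermediate value theorem, and uniqueness from strict monotonicity of $P$ on the positive reals, which comes from the nonnegative coefficients. Your endpoint remark also matches the paper. It explicitly sets aside as trivial both the all-zero case and the case where exactly one $a_i$ equals $1$, i.e.\ $P(x)=x^k-1$, whose root is $1$.
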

\begin{proof}
We can clearly see that $P(0)=-1$ and that $P(1)>0$ as long as we don't have $a_i = 0$ $\forall i$ or that exactly one $a_i$ equals $1$ and all else equal $0$ (these cases are trivial and can be disregarded). As $P(x): \mathbb{R} \rightarrow \mathbb{R}$ is a continuous function, there must be some value $0 <\beta < 1$ such that $P(\beta) = 0$.
\par
To show that $\beta$ is unique, we will suppose there exists $\beta' \neq \beta$. Without loss of generality, assume $\beta < \beta'$. As $a_i$ are all non-negative (excluding again the trivial case where $a_i =0$ for all $i$), we know that $P(\beta) < P(\beta')$. As such, they cannot be equal and $P(\beta') \neq 0$.
\end{proof}

Please note that in \cite{Winstone}, Winstone expresses his subdivision polynomials in the form $x^n-a_1 x^{n-1}+...+a_{n-1} x + a_n$. This has uses in his case, but is not our prefered method of expressing subdivision polynomials. The primary difference this creates is that our subdivision polynomials have a unique root between $0$ and $1$, while Winstone's polynomials have a unique root greater than $0$.

The reason we call polynomials of this form "subdivision polynomials" is that they form a method of subdividing an interval into $\sum_{i=1}^n a_i$ intervals, each with a length in $\langle \beta \rangle$. This is extremely useful for assembling partition pair representations of elements of \FN{\beta}. This can be achieved by writing $P(\beta)=0$, where $\beta$ is the positive real root of our subdivision polynomial $P$. As each subdivision polynomial has a $-1$ constant term, we can rewrite this equation as $a_n \beta^n + a_{n-1} \beta^{n-1} +... + a_1 \beta = 1$. As we will generally choose $1$ as the length of our interval when constructing Algebraic Bieri-Strebel Groups, we can interpret this equation as the sum of the length of these segments (which all have length expressible in the form $\beta^k$ for some $k \in \mathbb{N}$) is equal to $1$. Thus, we can divide the interval into $\sum_{k=1}^n a_k$ segments, comprised of $a_n$ segments of length $\beta^n$, $a_{n-1}$ segments of length $\beta^{n-1}$ and so on. We can find such a partition for any $\beta$ that is the positive real root of a subdivision polynomial.

\begin{center}
\begin{figure}[H]
\centering
\begin{tikzpicture}[scale=8]
\draw[thick] (0,0) -- (1,0);
\draw[thick] (0,-0.1) -- (0,0.1);
\draw (0.2, -0.05) -- (0.2, 0.05);
\draw (0.6, -0.05) -- (0.6, 0.05);
\draw[thick] (1,-0.1) -- (1, 0.1);
\filldraw (0.1,0) circle (0pt) node[anchor=south]{$\beta^2$};
\filldraw (0.4,0) circle (0pt) node[anchor=south]{$\beta$};
\filldraw (0.8,0) circle (0pt) node[anchor=south]{$\beta$};
\filldraw (0,0.1) circle (0pt) node[anchor=south]{$0$};
\filldraw (1,0.1) circle (0pt) node[anchor=south]{$1$};
\end{tikzpicture}
\caption{A partition of the unit interval based on the subdivision polynomial $x^2 + 2x -1$}
\end{figure}
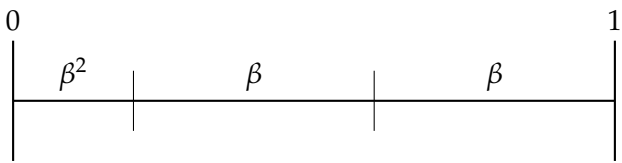
\end{center}

This method of partition can be applied to any of the subintervals created by the previous partition in a manner similar to the repeated bisection seen when forming partition pairs for $\TF$. As long as the same polynomial partition is consistently used, all created subintervals will have length in $\langle \beta \rangle$. This is extremely useful when forming elements of \FN{n} as the gradient of the piece of any element that maps $\beta^i$ to $\beta^j$ will be $\beta^{j-i}$, which will always be in $\langle \beta \rangle$, the slope group for \FN{\beta}.
\par
Applying the methods of subdivision polynomials to our previous examples allows us to see Algebraic Bieri Strebel groups as a natural generalisation of $\TF$ and \FN{n}. As partitions in $\TF$ split an interval into $2$ equal parts, and partitions in \FN{n} split an interval into $n$ equal parts, we can assign them the subdivision polynomials $2x-1$ and $nx-1$ respectively. Thus we can see \FN{n} as the "linear" Bieri-Strebel groups, that is to say the algebraic Bieri-Strebel groups with linear subdivision polynomials.

\subsection{Winstone's Tree Pairs}\label{wintree}

An important result for our understanding of Algebraic Bieri-Strebel groups, and those with quadratic subdivision polynomials in particular, was the development of tree-pair presentations for these groups. Tree-pairs were originally introduced for the group with subdivision polynomial $x^2+x-1$, commonly written as \FN{\tau}, by Burillo, Nucinkis and Reeves in \cite{Ftau}. This paper has three major conclusions for the development of tree-pair representations for algebraic Bieri-Strebel groups. The first is that the different lengths of intervals, all of which are of the form $\beta^k$ for the group \FN{\beta}, correspond to different depths on the tree, and as partitions in Bieri-Strebel groups create intervals of different lengths, the carets representing those partitions should have legs reaching to different depths.
\par
The Second realisation follows on from the first. As partitions are no longer creating multiple intervals of the same length, the order that those intervals appear in is important. To represent this in tree pairs, we have to make use of multiple different carets, which order their legs of different length in different ways.

\begin{center}
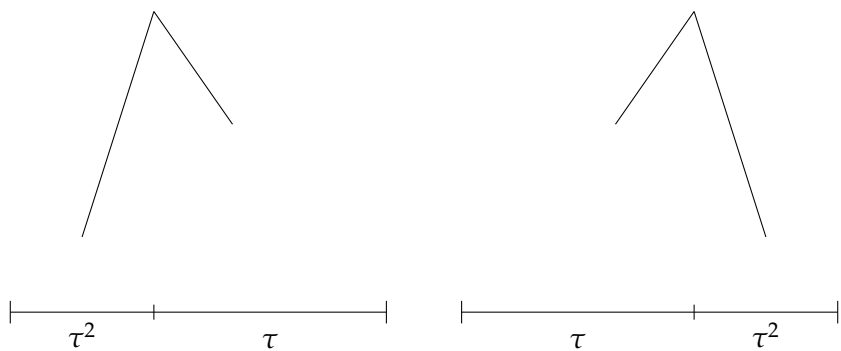
\begin{figure}[H]
\centering
\begin{tikzpicture}[scale=5]
\draw (-1.1,0)--(-0.1,0);
\draw (0.1,0)--(1.1,0);
\draw(0.1,0.03)--(0.1,-0.03);
\draw(1.1,0.03)--(1.1,-0.03);
\draw(-0.1,0.03)--(-0.1,-0.03);
\draw(-1.1,0.03)--(-1.1,-0.03);
\draw(0.7182,0.02)--(0.7182,-0.02);
\draw(-0.7182,0.02)--(-0.7182,-0.02);
\filldraw (0.4091,-0.03) circle (0pt) node[anchor=north]{$\tau$};
\filldraw (-0.4091,-0.03) circle (0pt) node[anchor=north]{$\tau$};
\filldraw (0.9091,0) circle (0pt) node[anchor=north]{$\tau^2$};
\filldraw (-0.9091,0) circle (0pt) node[anchor=north]{$\tau^2$};
\draw(0.9091,0.2)--(0.7182,0.8);
\draw(0.5091,0.5)--(0.7182,0.8);
\draw(-0.9091,0.2)--(-0.7182,0.8);
\draw(-0.5091,0.5)--(-0.7182,0.8);
\end{tikzpicture}
\caption{The two caret types for the Bieri-Strebel group corresponding to $x^2+x-1$, known as $\textbf{F}_\tau$, and the interval partitions they each represent. We consider the long leg at depth $2$ and the short leg at depth $1$. }\label{taucarets}
\end{figure}
\end{center}

The final realisation is an extension of the second. With two different types of carets with which to build trees, it is now possible to build two trees with different carets that represent the same interval partition. As such, when we wish to construct equivalence classes of tree-pair diagrams as we did in \ref{treepair}, we have to include in the equivalence relation the ability to switch between equivalent subtrees. We refer to pairs of equivalent trees built with different carets as caret relations.

\begin{center}
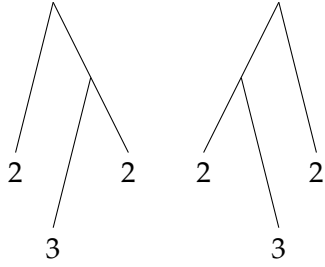
\begin{figure}[H]
\centering
\begin{tikzpicture}[scale=1]
\draw(-1.5,0)--(-2,-2);
\draw(-1.5,0)--(-1,-1);
\draw(-1,-1)--(-1.5,-3);
\draw(-1,-1)--(-0.5,-2);
\draw(1.5,0)--(2,-2);
\draw(1.5,0)--(1,-1);
\draw(1,-1)--(1.5,-3);
\draw(1,-1)--(0.5,-2);
\filldraw(-2,-2) circle (0pt) node[anchor=north]{$2$};
\filldraw(-1.5,-3) circle (0pt) node[anchor=north]{$3$};
\filldraw(-0.5,-2) circle (0pt) node[anchor=north]{$2$};
\filldraw(2,-2) circle (0pt) node[anchor=north]{$2$};
\filldraw(1.5,-3) circle (0pt) node[anchor=north]{$3$};
\filldraw(0.5,-2) circle (0pt) node[anchor=north]{$2$};
\end{tikzpicture}
\caption{Two equivalent trees built from carets in the \FN{\tau} tree-pair presentation.} \label{equitree}
\end{figure}
\end{center}

Burillo, Nucinkis and Reeves are able to use this tree-pair representation to construct an infinite presentation for \FN{\tau}. The presentation contains two different infinite families of generators, corresponding to the two different caret types in the tree-pair representation. The presentation also has a similar set of relations to the $\TF$ presentation in \ref{Fpres}, and also has a set of relations corresponding to the tree equivalence depicted in \ref{equitree}. Combining all of this, the presentation constructed in \cite{Ftau} can be written as

\begin{equation}\label{taupres}
\textsf{F}_{\tau} = \langle x_0, y_0, x_1, y_1,... | a_j b_i = b_i a_{j+1} \textsf{ for } a, b \in \{x,y\}, i<j;x_i x_{i+1} = y_i^2 \rangle
\end{equation}
\par
When considering $x_i$ and $y_i$ as PL-homeomorphisms, we may write them in the following way

\begin{equation}\label{generators} 
\begin{aligned}
x_i(n) = & {\left\{
	\begin{array}{llll}
	n & \mbox{for } 0 \leq n \leq 1-\tau^i, \\
	\tau^{-2} n - \tau^{-1}(1-\tau^i)& \mbox{for } 1-\tau^i \leq n \leq 1-\tau^i+\tau^{i+4}, \\
	n + \tau^{i+3} & \mbox{for } 1-\tau^i+\tau^{i+4} \leq n \leq 1-\tau^{i+1}, \\
	\tau n + \tau^2  & \mbox{for } 1-\tau^{i+1} \leq n \leq 1,
	\end{array}
	\right.} \\
y_i(n) = & {\left\{
	\begin{array}{llll}
	n & \mbox{for } 0 \leq n \leq 1-\tau^i, \\
	\tau^{-1} n - \tau^{-1}(1-\tau^i)& \mbox{for } 1-\tau^i \leq n \leq 1-\tau^{i+1}, \\
	\tau n + \tau^2  & \mbox{for } 1-\tau^{i+1} \leq n \leq 1.
	\end{array}
	\right.}
\end{aligned}
\end{equation}
\\

Burillo, Nucinkis and Reeves were also able to reduce this infinite presentation down to a finite presentation containing $4$ generators and $10$ relations. As with the finite presentation for $\TF$, this presentation is cumbersome to use in comparison to the infinite presentation, but can be seen at (\cite{Ftau}, Section 4).
\par
In his thesis \cite{Winstone}, Winstone was able to generalise these results to the quadratic Bieri-Strebel groups, that is to say the algebraic Bieri-Strebel groups with a quadratic associated subdivision polynomial. In doing so, Winstone developed two theorems relevant to our discussion, as well as an infinite presentation for a subset of these groups. Quite possibly the most important result from \cite{Winstone} is the following:

\begin{citing}\label{deftree}
(\cite{Winstone}, Theorem 1.2.3) For a quadratic Bieri-Strebel group \FN{\beta} with subdivision polynomial of the form $ax^2+bx-1$, \FN{\beta} has a well defined tree-pair representation if and only if $a \leq b$.
\end{citing}

When considering tree-pair representations of other quadratic Bieri-Strebel groups, we have to consider the rapid increase of caret types as the coefficients of the subdivision polynomial increase. As depicted in \ref{taucarets}, the subdivision polynomial dictates the possible interval partitions, and therefore the possible caret types. The important observation is that the number of intervals in a partition for \FN{\beta} is equal to $a+b$, where \FN{\beta} has the subdivision polynomial $ax^2+bx-1$. We would expect any caret to have $a$ legs of length $2$ and $b$ legs of length $1$ (to correspond to the $a$ intervals of length $\beta^2$ and the $b$ intervals of length $\beta$ in a partition for \FN{\beta}. As such, there are ${a+b}\choose{a}$ possible caret types in the tree pair representation for \FN{\beta}.

\begin{center}
\begin{figure}[H]
\centering
\begin{tikzpicture}[scale=2]
\draw (0,0) -- (-1,-2);
\draw (0,0) -- (0,-1);
\draw (0,0) -- (0.5,-1);

\draw(2,0) -- (1.5,-1);
\draw(2,0) -- (2,-2);
\draw(2,0) -- (2.5,-1);

\draw(4,0) -- (3.5,-1);
\draw(4,0) -- (4,-1);
\draw(4,0) -- (5,-2);
\end{tikzpicture}
\caption{The three possible caret types in \FN{\beta} with subdivision polynomial $x^2+2x-1$.} 
\end{figure}
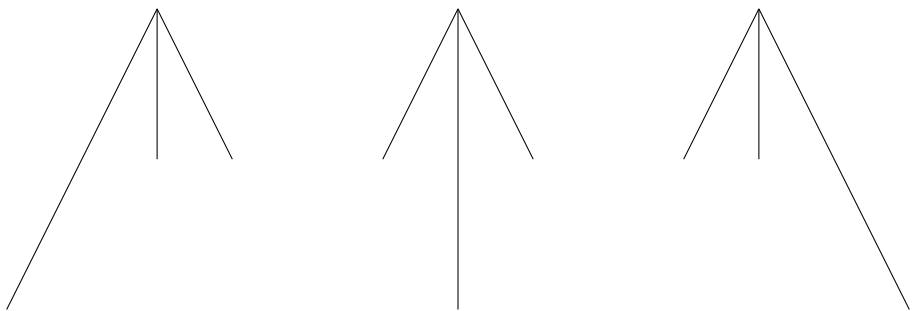
\end{center}

Winstone's second major result simplifies the large variety of caret types for quadratic Bieri-Strebel groups, making tree-pair representations much more manageable.

\begin{citing}\label{twocaret}
(\cite{Winstone}, Remark 35) There exist two caret types in each well-defined tree pair representation of a quadratic Bieri-Strebel group such that for any tree within that tree-pair representation, an equivalent tree may be constructed using only carets of those two types
\end{citing}

This result is not just important for simplifying the tree-pair representations, but also leads into the general presentation for quadratic Bieri-Strebel groups that have tree-pair representations. Reducing each tree pair representation once again allows us to generate \FN{\beta} from two infinite families of generators. Using these caret types, and the caret relations between them, Winstone was able to construct infinite presentations for a large number of quadratic Bieri-Strebel groups.

\begin{citing}\label{betapres}
(\cite{Winstone}, Theorem 1.2.5) Let \FN{\beta} be an algebraic Bieri-Strebel group with associated subdivision polynomial $ax^2+bx-1$. If $a \leq b$, then \FN{\beta} has the infinite presentation:

\begin{equation}
\textbf{F}_\beta = \langle x_0, y_0, x_1, y_1,...| R_1, R_2 \rangle
\end{equation}

where $R_1$ and $R_2$ are the relations:

\begin{equation}
\begin{aligned}
R_1 & : f_j g_i = g_i f_{j+a+b-1} \textsf{ for } g,f \in \{x,y\}, i < j \\
R_2 & : x_{i+a} x_{i+a+1}...x_{i+2a-1} x_i = y_{i} y_{i+1} ... y_{i+a-1} y_i \textsf{ for all } i \geq 0
\end{aligned}
\end{equation}
\end{citing}

If we so desired, we could reduce these infinite presentations to finite presentations. We have no immediate use for the finite presentations and so will leave their existence implied rather than explicitly writing them out.

For quadratic Bieri-Strebel groups with subdivision polynomial $ax^2+bx-1$, $a>b$, not much is known outside of generalities for all Bieri-Strebel groups or other large subsets of such groups. Winstone (\cite{Winstone}, Theorem 1.2.4) concludes that each such group has an element $g$ such that $g$ cannot be represented either as a partition pair or as a tree pair. Owen Tanner (\cite{Tanner}, Theorem 3) was able to show that all such groups are finitely generated.
\par
Of particular interest are the groups with polynomial of the form $(n-1)nx^2+x-1$ for $n \geq 2$. This polynomial can be factorised as $(nx-1)((n-1)x+1)$ and therefore has the unique positive root $\frac{1}{n}$. By the definition of Algebraic Bieri-Strebel group given in \ref{algdef}, we would expect the group with this subdivision polynomial to have slopes in $\langle \frac{1}{n} \rangle$ and breakpoints in $\mathbb{Z}[\frac{1}{n}]$, which would make it the same as \FN{n}. Notably, \FN{n} can be expressed as the algebraic Bieri-Strebel group with subdivision polynomial $nx-1$, which when used to compute interval divisions and tree pairs in the style of Winstone produces the tree-pairs described in \ref{fn}. This leads us to make the following conjecture:

\begin{conjecture}
A well defined tree-pair for an algebraic Bieri-Strebel group \FN{\beta} can only be derived from an associated subdivision polynomial $P(x)$ if $P(x)$ is the minimal polynomial with the root $\beta$ among polynomials of the form $a_nx^n + ... + a_1x -1$, $a_i \in \mathbb{Z}_{\geq 0}$.
\end{conjecture}

Among linear and quadratic subdivision polynomials, we can see evidence for this conjecture. In particular, we know that all linear subdivision polynomials take the form $nx-1$ for some $x$ (with the polynomial having root $\frac{1}{n}$. Should a quadratic subdivision polynomial have the root $\frac{1}{n}$, then by the factor theorem, that polynomial must have $(x-\frac{1}{n})$ as a factor, which can be expressed as $nx-1$ when working with integer polynomials. We can now use what we know about the general form of subdivision polynomials to determine facts about other factors of the quadratic polynomial. Since the polynomial is quadractic, it can only be factorised as the product of two linear polynomials, meaning we can write

\begin{equation}
ax^2+bx-1 = (nx-1)(cx+d)
\end{equation}

Immediately, we can conclude that $d=1$, as $-1*1=-1$. Similarly, we can conclude $c>0$, as otherwise $a \leq 0$, which would mean the quadratic polynomial would not be a subdivision polynomial. That leaves us with the two equations $a=nc$ and $b=n-c$. As $n$ and $c$ are both positive integers, we can therefore conclude that $a>b$. As such, any time a quadratic subdivision polynomial $ax^2+bx-1$ shares its unique positive root with a linear subdivision polynomial $nx-1$, we have that $a>b$ and it therefore falls outside the set of subdivision polynomials for which Winstone derived tree pairs.

We note that this statement cannot be an if and only if. That is to say that it is not necessarily the case that a subdivision polynomial that does not share a root with a sudivision polynomial of lesser degree will have a tree pair representation. As a brief example, the polynomial $3x^2+x-1$ has the root $\frac{\sqrt{13}-1}{6}$, which is clearly irrational and therefore cannot be the root of any linear polynomial, but by \ref{deftree}, this subdivision polynomial does not have a well defined tree pair representation. 

\section{Higher Order Polynomials}

For algebraic Bieri-Strebel groups with higher order subdivision polynomials, little is known outside of general results for Bieri-Strebel groups, such as Tanner's finite generation result \cite{Tanner}. When working with higher order polynomials, we expect a generalisation of the methods used to construct tree-pair representations for quadratic Bieri-Strebel groups. In particular, each caret in a tree=pair representation for the group with subdivision polynomial $ax^2+bx-1$ has $a$ legs of length $2$ and $b$ legs of length $1$. Therefore, when generalising to higher order polynomials, we would expect the group with subdivision polynomial $\Sigma_{i=1}^n a_i x^i -1$ will have $a_i$ legs of length $i$. Following this generalisation, a caret relation is known for the group corresponding to the polynomial $x^3+x-1$, but it is unknown if this allows a well-defined tree-pair representation.

\begin{center}
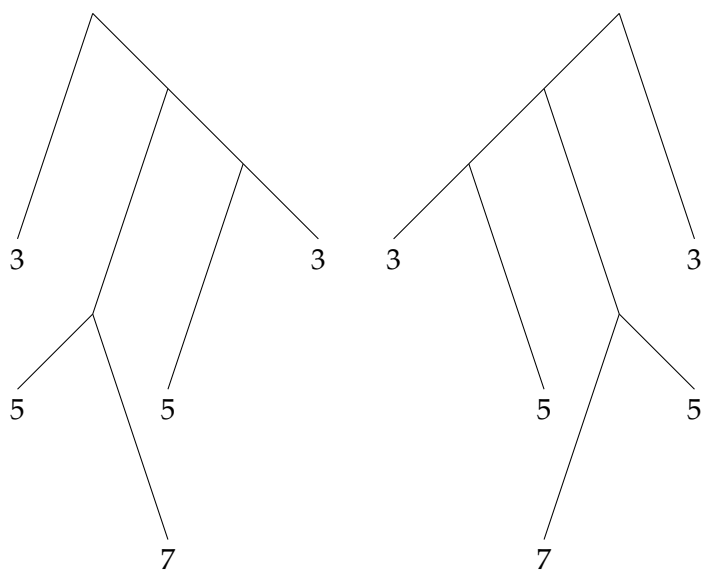
\begin{figure}[H]
\centering
\begin{tikzpicture}[scale=1]
\draw (0,0) -- (-1,-3);
\draw (0,0) -- (1,-1);
\draw (1,-1) -- (0,-4);
\draw (1,-1) -- (2,-2);
\draw (2,-2) -- (3,-3);
\draw (2,-2) -- (1,-5);
\draw (0,-4) -- (-1,-5);
\draw (0,-4) -- (1, -7);
\filldraw (-1,-3) circle (0pt) node[anchor=north]{$3$};
\filldraw (-1,-5) circle (0pt) node[anchor=north]{$5$};
\filldraw (1,-7) circle (0pt) node[anchor=north]{$7$};
\filldraw (1,-5) circle (0pt) node[anchor=north]{$5$};
\filldraw (3,-3) circle (0pt) node[anchor=north]{$3$};

\draw (7,0) -- (8,-3);
\draw (7,0) -- (6,-1);
\draw (6,-1) -- (7,-4);
\draw (6,-1) -- (5,-2);
\draw (5,-2) -- (4,-3);
\draw (5,-2) -- (6,-5);
\draw (7,-4) -- (8,-5);
\draw (7,-4) -- (6, -7);
\filldraw (8,-3) circle (0pt) node[anchor=north]{$3$};
\filldraw (8,-5) circle (0pt) node[anchor=north]{$5$};
\filldraw (6,-7) circle (0pt) node[anchor=north]{$7$};
\filldraw (6,-5) circle (0pt) node[anchor=north]{$5$};
\filldraw (4,-3) circle (0pt) node[anchor=north]{$3$};
\end{tikzpicture}
\caption{The caret relation for $x^3+x-1$. It differs from quadratic caret relations in that each tree uses both caret types.} 
\end{figure}
\end{center}

While determining well-defined tree pairs for higher order Bieri-Strebel groups is an as-yet unsolved problem, we are able to determine some cases where a tree-pair representation based on the subdivision polynomial does not work. 

\begin{theorem}\label{roothm}
The Bieri-Strebel group with subdivision polynomial $ax^{2n}+bx^n-1$ cannot have a well defined tree-pair representation.
\end{theorem}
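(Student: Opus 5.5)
The plan is to reduce to the situation the paper has already considered for quadratic polynomials with a linear factor, i.e.\ the example $(n-1)nx^2+x-1$ that gives back \FN{n}, and the conjecture on minimal subdivision polynomials. Write $P(x)=ax^{2n}+bx^n-1$ and set $y=x^n$. Then $P(x)=Q(x^n)$ with $Q(y)=ay^2+by-1$, which is again a subdivision polynomial. Let $\gamma=\beta^n$; this is the unique root of $Q$ in $(0,1)$. Every caret built from $P$ has $a$ legs of length $2n$ and $b$ legs of length $n$. So in any tree built from such carets, every leaf sits at a depth divisible by $n$, and every interval produced has length in $\langle\beta^n\rangle=\langle\gamma\rangle$.

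The first step is to make this precise. I will show that the set of partitions of $[0,1]$ obtainable from trees of $P$-carets is exactly the set obtained from trees of $Q$-carets, with depths scaled by $n$. This is a straightforward induction on the number of carets, since the subdivision of an interval of length $\beta^{kn}$ by $P$ is literally the subdivision of an interval of length $\gamma^k$ by $Q$. Consequently any element represented by a $P$-tree pair is a PL homeomorphism whose breakpoints lie in $\mathbb{Z}[\gamma]$ and whose slopes lie in $\langle\gamma\rangle$. In other words, the would-be tree-pair group is contained in \FN{\gamma}, or at least in the subgroup of it generated by the $Q$-tree pairs.

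The second step is to show this is a proper subset of \FN{\beta} when $n\ge 2$. I will take the element of \FN{\beta} that maps an interval of length $\beta$ linearly onto one of length $\beta^2$; for instance, an element of \FN{\beta} with slope $\beta$ on an initial segment, whose breakpoints lie in $\mathbb{Z}[\beta]$. Its slope $\beta$ is not in $\langle\beta^n\rangle$, because $\beta$ is not a root of unity and $\beta\neq\beta^{kn}$ for any $k$. Such an element exists by the standard construction: use $P$-partitions on the left and subdivide one leaf differently, or more directly, note that \FN{\beta} contains elements with every slope in $\langle\beta\rangle$ by Bieri--Strebel's general theory. Hence no tree pair built from $P$-carets can represent it, and the representation is not onto \FN{\beta}.

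Finally, I will treat the case $n=1$, which is the quadratic case. There I will invoke Citation~\ref{deftree}; presumably the theorem's intended hypotheses cover the relevant range, such as $a>b$, or the argument is only claimed for $n\ge2$. The main obstacle I expect is the second step, namely exhibiting concretely an element of \FN{\beta} with a slope outside $\langle\beta^n\rangle$. To keep it self-contained, I would first try the map that sends $[0,\beta^{n}]$ to $[0,\beta^{n+1}]$ linearly and is then extended to a homeomorphism of $[0,1]$. For the extension I would check that $1-\beta^n$ and $1-\beta^{n+1}$ can be matched by a PL map with slopes in $\langle\beta\rangle$ and breakpoints in $\mathbb{Z}[\beta]$, using the subdivision identity $a\beta^{2n}+b\beta^n=1$ to write both complementary lengths as sums of powers of $\beta$ with a common number of pieces.
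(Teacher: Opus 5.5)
Your first step is the paper's caret correspondence (the map $i^*$, scaling depths by $n$). Your obstruction is correct and genuinely different from the paper's. Write $\gamma=\beta^n$ (the paper's quadratic root). The paper exhibits an element whose slopes lie in $\langle\gamma\rangle$ but which has a breakpoint at $\sqrt{\gamma}\notin\mathbb{Z}[\gamma]$; you exhibit a slope outside $\langle\gamma\rangle$. Your version is cleaner in two ways. It needs no irrationality, whereas Lemma~\ref{rootexclude} does and is false for $12x^2+x-1$, whose root is $\frac14$. It also covers odd $n$, which the paper's argument through $\sqrt{\gamma}$ does not reach.

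The genuine gap is scope. You only show that tree pairs built from $P$-carets miss part of \FN{\beta}. The paper's proof explicitly goes on to ``preclude the possibility of a different tree pair representation'' (its last paragraph, via Example~\ref{rootfind}), because the theorem is a claim about the group, not about one caret system; your proposal has nothing corresponding to this. Your method can supply it:
\begin{itemize}
\item A caret for $\beta$ of any shape encodes an identity $1=\sum_k c_k\beta^k$ with $c_k\in\mathbb{Z}_{\ge 0}$.
\item Grouping exponents mod $n$ writes the right-hand side as $\sum_{r=0}^{n-1}\beta^r O_r$, where each $O_r\ge 0$ is a nonnegative combination of powers of $\gamma$.
\item If $1,\beta,\dots,\beta^{n-1}$ are linearly independent over $\mathbb{Q}(\gamma)$, comparing coefficients forces $c_k=0$ unless $n\mid k$.
\item Then every caret system has leaves in $\langle\gamma\rangle$, and your slope obstruction rules out all of them.
\end{itemize}
That independence hypothesis cannot be dropped. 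The polynomial $12x^4+x^2-1$ has root $\frac12$, so its group is $\TF$, which does have a tree-pair representation. (You are right that $n=1$ must be excluded, by Citation~\ref{deftree}.)

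Separately, the self-contained construction you plan to try fails. By the same grouping, under the independence hypothesis $1-\beta^{n+1}=1-\gamma\beta$ is not a nonnegative integer combination of powers of $\beta$: its $\beta$-component would be $-\gamma<0$. This is the same phenomenon as Example~\ref{rootfind}, and it means the complementary intervals cannot be matched piece by piece. Subdividing a leaf ``differently'' using $P$-partitions also fails, since it only ever produces slopes in $\langle\gamma\rangle$. Either keep your appeal to Bieri--Strebel, or use this element:
\begin{itemize}
\item slope $\beta$ on $[0,x]$;
\item slope $\beta^{-1}$ on $[x,x+\beta x]$;
\item the identity on $[x+\beta x,1]$;
\end{itemize}
where $x=\beta^m$ with $m$ large enough that $x(1+\beta)\le 1$.
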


We begin our proof with the following example.

\begin{example} \label{rootfind}
For all $0<p \in \mathbb{Z}[\beta]$, where $\beta$ is the root of an $n$-th degree polynomial, is it possible to write:

$$
p = b_0 + b_1 \beta +...+b_{n_1} \beta^{n-1}
$$

where $b_i \in \mathbb{Z}_{\geq 0}$ for all $i$?
\end{example}

The following counterexample works as a counterexample to Winstone's theorem (\cite{Winstone}, Theorem 2.3.3).

\begin{proof}[Counterexample]
Consider the polynomial $x^4+x^2-1$, with the root $\sqrt{\tau}$. We will take $p=1-\sqrt{\tau}$. We work with the reciprocal of our root, taking $\beta = \sqrt{\tau}^{-1}$, which means $p=1-\beta^{-1}$, and as we have that $\beta^4-\beta^2-1=0$, we can say that $\beta^{-1}=\beta^3 -\beta$, and therefore $p=1-\beta^3+\beta$.
\par
We can represent the action of substituting $\beta^3 = \beta^{-1} - \beta$ into $p$  by writing the coefficients of $p$ as a vector $\underline{p}$ and expressing the substitution as the matrix $A$ based on the polynomial $x^4-x^2-1$, rewritten as $\beta^4 = \beta^2 + 1$
$$
A =
\begin{pmatrix}
0 & 1 & 0 & 0 \\
1 & 0 & 1 & 0 \\
0 & 0 & 0 & 1 \\
1 & 0 & 0 & 0
\end{pmatrix}
; \underline{p} = 
\begin{pmatrix}
-1 \\ 0 \\ 1 \\ 1
\end{pmatrix}
$$

To simplify a later segment, we will calculate 

$$A \underline{p} = \begin{pmatrix} 0 \\ 0 \\ 1 \\ -1 \end{pmatrix}$$

Were it possible to express $p$ with only positive coefficients of powers of $\beta$, there would be $N$ such that $A^N \underline{p}$ has only positive entries. We will use the constructed example to show this is not the case. First, we will calculate 

$$
A^4 =
\begin{pmatrix}
2 & 0 & 1 & 0 \\
0 & 2 & 0 & 1 \\
1 & 0 & 1 & 0 \\
0 & 1 & 0 & 1 
\end{pmatrix}
$$

Here we wish to emphasise the structure of the matrix, with alternating entries in each row and column being empty. We will consider a general matrix of this form and multiply it by $A$:

$$
\begin{pmatrix}
a & 0 & b & 0 \\
0 & c & 0 & d \\
e & 0 & f & 0 \\
0 & g & 0 & h 
\end{pmatrix}
\begin{pmatrix}
0 & 1 & 0 & 0 \\
1 & 0 & 1 & 0 \\
0 & 0 & 0 & 1 \\
1 & 0 & 0 & 0
\end{pmatrix}
=
\begin{pmatrix}
0 & a & 0 & b \\
c+d & 0 & c & 0 \\
0 & e & 0 & f \\
g+h & 0 & g & 0 
\end{pmatrix}
$$

Which has a similar alternating pattern, but all the nonzero entries are now zero and all the zero entries are now nonzero. Multiplying this matrix by $A$ again gives us. 

$$
\begin{pmatrix}
0 & a & 0 & b \\
c+d & 0 & c & 0 \\
0 & e & 0 & f \\
g+h & 0 & g & 0 
\end{pmatrix}
\begin{pmatrix}
0 & 1 & 0 & 0 \\
1 & 0 & 1 & 0 \\
0 & 0 & 0 & 1 \\
1 & 0 & 0 & 0
\end{pmatrix}
=
\begin{pmatrix}
a+b & 0 & a & 0 \\
0 & c+d & 0 & c \\
e+f & 0 & e & 0 \\
0 & g+h & 0 & g 
\end{pmatrix}
$$

which is of the same structure as $A^4$. As such, we can see that $A^{2N}$ will have the same structure as $A^4$ for $N \geq 2$, an $A^{2N+1}$ will have the same structure as $A^5$, for all $N \geq 2$. We can now show that neither of these structures can create a vector $A^N \underline{p}$ such that all entries in the vector are nonnegative. We can simplify by using $A \underline{p}$ as the vector, from which we can clearly see:

$$
\begin{pmatrix}
a & 0 & b & 0 \\
0 & c & 0 & d \\
e & 0 & f & 0 \\
0 & g & 0 & h 
\end{pmatrix}
\begin{pmatrix} 0 \\ 0 \\ 1 \\ -1 \end{pmatrix}
=
\begin{pmatrix} b \\ -d \\ f \\ -h \end{pmatrix}  
$$

$$
\begin{pmatrix}
0 & a & 0 & b \\
c & 0 & d & 0 \\
0 & e & 0 & f \\
g & 0 & h & 0 
\end{pmatrix} 
\begin{pmatrix} 0 \\ 0 \\ 1 \\ -1 \end{pmatrix}
=
\begin{pmatrix} -b \\ d \\ -f \\ h \end{pmatrix}
$$
  
As $A$ is a nonnegative matrix, all entries in $A^N$ will be nonnegative for all $N$, and so $A^N \underline{p}$ will always contain two negative numbers for $N \geq 5$. 
\end{proof}

It is straightforward to see that the alternating nature of this counterexample will generalise to any polynomial of the form $ax^4+bx^2-1$, as the resulting polynomial $A$ will just be 

$$
\begin{pmatrix}
0 & 1 & 0 & 0 \\
a & 0 & 1 & 0 \\
0 & 0 & 0 & 1 \\
b & 0 & 0 & 0
\end{pmatrix}
$$

and as such we have

$$
A^4 = 
\begin{pmatrix}
a+b^2 & 0 & b & 0 \\
0 & a+b^2 & 0 & b \\
ab & 0 & a & 0 \\
0 & ab & 0 & a
\end{pmatrix}
$$

which has the same structure as $A^4$ from the counterexample. As such, the alternating patten will repeat. We can then construct a $\underline{p}$ such that $A^N\underline{p}$ always has a negative number.
\par
What is less immediately obvious is that this pattern extends to any $a_n x^n + ... + a_1 x -1$ with the condition that the set of $\{i_1,...,i_k\}$ where $a_{i_j}$ is nonzero is not coprime. In this case, we have that $A^N$ has the same structure as $A^N+k$, where $k = \textsf{hcf}(i_1,...,i_k)$. However, this can be seen with elementary matrix multiplication.
\par
The next step is to show the following:

\begin{lemma}\label{rootexclude}
For any $\beta$ as the root of a quadratic subdivision polynomial $ax^2+bx-1$, $\sqrt{\beta}$ is not in $\mathbb{Z}[\beta]$.
\end{lemma}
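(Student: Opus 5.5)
The plan is to argue in the number field $\mathbb{Q}(\beta)$ rather than in the ring $\mathbb{Z}[\beta]$ itself, and to obtain a contradiction from the field norm. Since $\mathbb{Z}[\beta]\subseteq\mathbb{Q}(\beta)$, it is enough to show that $\beta$ is not a square in $\mathbb{Q}(\beta)$. I will assume throughout that $ax^2+bx-1$ is irreducible over $\mathbb{Q}$, so that $\beta$ is a quadratic irrational and $[\mathbb{Q}(\beta):\mathbb{Q}]=2$. This hypothesis will have to be added to the statement, and I explain why below.

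First, the irreducible case. By Vieta's formulas, the two roots $\beta,\beta'$ of $ax^2+bx-1$ satisfy $\beta\beta'=-1/a$. Hence the norm is
$$N_{\mathbb{Q}(\beta)/\mathbb{Q}}(\beta)=\beta\beta'=-\tfrac{1}{a}<0.$$
Now suppose $\sqrt{\beta}=\gamma\in\mathbb{Z}[\beta]\subseteq\mathbb{Q}(\beta)$. Then $\beta=\gamma^2$, and multiplicativity of the norm gives
$$N(\beta)=N(\gamma)^2\geq 0,$$
because $N(\gamma)\in\mathbb{Q}$. This contradicts $N(\beta)=-1/a<0$.

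If one prefers to avoid norms, the same argument can be done by hand. Write $\gamma=u+v\beta$ with $u,v\in\mathbb{Q}$. Expand $\gamma^2$ and reduce using $\beta^2=(1-b\beta)/a$. Comparing coefficients of $1$ and $\beta$ with those of $\beta$ gives two rational equations in $u,v$. Eliminating one variable should lead to $u^2$ equal to a negative multiple of a square, or to an equivalent contradiction coming from $\beta\beta'<0$.

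Second, the reducible case, which I expect to be the main obstacle because the statement as written fails there. If $ax^2+bx-1$ factors, then by the discussion preceding the Conjecture $\beta=1/n$, and the polynomial has the form $(nx-1)(cx+1)$. In that case $\mathbb{Z}[\beta]=\mathbb{Z}[\tfrac1n]$. This ring contains $\sqrt{1/n}$ whenever $n$ is a perfect square. For example, $4x^2+3x-1$ has root $\tfrac14$, and $\tfrac12=2\cdot\tfrac14\in\mathbb{Z}[\tfrac14]$.

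So the lemma should be restricted to irreducible quadratics, or equivalently to $a\leq b$ together with irrational $\beta$. For the application to \cref{roothm} this restriction costs little. The reducible case corresponds to $\beta=1/n$, and then the group is really $\textbf{F}_{n}$ (or an $\textbf{F}_{m}$) in disguise. That case can be handled separately by the minimal-polynomial discussion preceding the Conjecture.

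Finally, I would remark that the norm argument extends to the use made later in the paper. For $ax^{2n}+bx^n-1$, the element $\beta^n$ is a root of $ax^2+bx-1$. Showing that $\beta$, or more precisely a suitable root of $\beta^n$, does not lie in $\mathbb{Z}[\beta^n]$ follows in the same way whenever the negative norm $-1/a$ is not an $n$-th power of a rational of the appropriate sign. This holds automatically for even exponents.
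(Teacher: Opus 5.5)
Your proof is correct, and so is your caveat: as literally stated, the lemma fails for reducible quadratics. Your example $4x^2+3x-1=(4x-1)(x+1)$, with $\beta=\tfrac{1}{4}$ and $\sqrt{\beta}=\tfrac{1}{2}=2\beta\in\mathbb{Z}[\beta]$, is a genuine counterexample. The paper's proof uses the missing hypothesis without stating it, at the step ``As $\beta$ is irrational'', where it compares the coefficients of $1$ and $\beta$. Restricting to irreducible $ax^2+bx-1$, i.e.\ to irrational $\beta$, is exactly what is needed. One slip: that restriction is not equivalent to ``$a\le b$ together with irrational $\beta$''. The condition $a\le b$ is sufficient, since a reducible $ax^2+bx-1$ forces $a>b$, but it is not necessary: $3x^2+x-1$ is irreducible with $a>b$.

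In the irreducible case, your main route differs from the paper's. The paper writes a putative square root as $c+d\beta$ and reduces $(c+d\beta)^2$ using $\beta^2=\frac{1}{a}-\frac{b}{a}\beta$. It claims $c,d$ are integers, but nothing depends on that; rational $c,d$ suffice. It then compares coefficients: the constant-term equation $c^2+d^2/a=0$ forces $c=d=0$, which contradicts the $\beta$-coefficient equation. Your ``by hand'' fallback is exactly this argument, and no elimination is needed, because the constant-term equation alone already reads $u^2=-v^2/a$.

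Your norm argument is basis-free: $N(\beta)=\beta\beta'=-1/a<0$, while every square in $\mathbb{Q}(\beta)$ has nonnegative norm. It makes clear that the obstruction is the sign of $\beta\beta'$. It also gives Corollary~\ref{rootcor} in one line, since $N(\gamma)^{2n}\ge 0$, and it extends to other exponents in the way you indicate at the end. The paper's computation is more elementary but tied to the basis $\{1,\beta\}$. Both arguments in fact show that $\beta$ is not a square in the whole field $\mathbb{Q}(\beta)$.
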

\begin{proof}
By definition,  elements of $\mathbb{Z}[\beta]$ have the form $c_0 + c_1 \beta + c_2 \beta^2 + \dots$. However, because $\beta$ is the root of the quadratic polynomial $ax^2+bx-1$, we can write $\beta^2 = \frac{1}{a} - \frac{b}{a} \beta$. As such, any term with a $\beta^2$ or higher power can be substituted, allowing us to express any element in $\mathbb{Z}$ in the form $c+d \beta$.

Now, suppose $\sqrt{\beta} \in \mathbb{Z}[\beta]$. We can then write $\sqrt{\beta} = c+d \beta$. From the property of the square root, we can then write

\begin{equation}
\begin{aligned}
(c+ d \beta)(c+d \beta) & = \beta \\
c^2 +2cd \beta + d^2 \beta^2  & = \beta \\
c^2 +2cd \beta +d^2 (\frac{1}{a}-\frac{b}{a} \beta) & = \beta \\
c^2 + \frac{d^2}{a} + (2cd - \frac{d^2 b}{a}) \beta & = \beta 
\end{aligned}
\end{equation}

As $\beta$ is irrational, we know that the only way for this equation to be true is if the constant coefficients on each side are equal, and so are the $\beta$ coefficients. This leaves us with the simultaneous equations

\begin{equation}
\begin{aligned}
c^2 + \frac{d^2}{a} & = 0 \\
2cd - \frac{d^2 b}{a} & = 1 \\
\end{aligned}
\end{equation}

From the first equation $c^2$ and $d^2$ are squares of numbers in $\mathbb{Z}$, and therefore must be positive, and $a$ is positive by the definition of a subdivision polynomial (as the subdivision polynomial in question is quadratic, $a \neq 0$ and so the equation is well defined). As such, both terms of the sum must be nonnegative and so it can only hold if $c=d=0$. However, setting $c=d=0$ in the second equation results in the equation reducing to $0=1$. As such we have a contradiction and so our assumption that $\sqrt{\beta} \in \mathbb{Z}$ must be incorrect.
\end{proof}

We can also add this brief corollary:

\begin{corollary} \label{rootcor}
For any $\beta$ as the root of a quadratic subdivision polynomial $ax^2+bx-1$, $n \in \mathbb{N}$, $\sqrt[2n]{\beta}$ is not in $\mathbb{Z}[\beta]$.
\end{corollary}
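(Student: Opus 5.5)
The plan is to reduce Corollary \ref{rootcor} to Lemma \ref{rootexclude} by a closure argument in the ring $\mathbb{Z}[\beta]$, using that a power of an element of a ring stays in the ring. Suppose, for contradiction, that $\gamma = \sqrt[2n]{\beta} \in \mathbb{Z}[\beta]$ for some $n \in \mathbb{N}$. Since $\mathbb{Z}[\beta]$ is closed under multiplication, every power $\gamma^k$ with $k \geq 0$ also lies in $\mathbb{Z}[\beta]$. In particular $\gamma^n = (\beta^{1/(2n)})^n = \beta^{1/2} = \sqrt{\beta}$ lies in $\mathbb{Z}[\beta]$. Here we take the positive real root throughout, so no sign ambiguity arises: $\beta > 0$ by the earlier lemma on subdivision polynomials, so $\gamma > 0$ and $\gamma^n$ is the positive square root of $\beta$.

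This directly contradicts Lemma \ref{rootexclude}, which asserts $\sqrt{\beta} \notin \mathbb{Z}[\beta]$ for every root $\beta$ of a quadratic subdivision polynomial $ax^2+bx-1$. Hence $\sqrt[2n]{\beta} \notin \mathbb{Z}[\beta]$ for all $n \in \mathbb{N}$. The case $n=1$ is the lemma itself, so the argument is uniform in $n$.

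There is essentially no obstacle. The only point needing care is to state explicitly that $\mathbb{Z}[\beta]$ is a ring, so closed under products, which is immediate from its definition as polynomial expressions in $\beta$ with integer coefficients. One should also fix the convention that $\sqrt[2n]{\beta}$ denotes the positive real root, so that $(\sqrt[2n]{\beta})^n$ is exactly the $\sqrt{\beta}$ excluded by the lemma rather than its negative. Even the negative root would be harmless, since $\mathbb{Z}[\beta]$ is closed under negation.
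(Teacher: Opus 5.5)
Your reduction is the paper's own argument, and the step itself is sound: closure of $\mathbb{Z}[\beta]$ under multiplication turns $\sqrt[2n]{\beta}\in\mathbb{Z}[\beta]$ into $(\sqrt[2n]{\beta})^n=\sqrt{\beta}\in\mathbb{Z}[\beta]$, contradicting Lemma~\ref{rootexclude}. Your claim that there is no obstacle is wrong, though. The lemma you invoke, and with it the corollary as stated, is false when $\beta$ is rational. Quadratic subdivision polynomials allow this; the paper itself notes that $(m-1)mx^2+x-1$ has root $1/m$.

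By the rational root theorem, a rational positive root has the form $\beta=1/m$ with $m\mid a$, so $\mathbb{Z}[\beta]=\mathbb{Z}[1/m]$. Two concrete counterexamples:
\begin{itemize}
\item $4x^2+3x-1=(4x-1)(x+1)$, or $12x^2+x-1=(4x-1)(3x+1)$, has $\beta=1/4$ and $\sqrt{\beta}=1/2=2\beta\in\mathbb{Z}[\beta]$.
\item $16x^2+15x-1=(16x-1)(x+1)$ has $\beta=1/16$ and $\sqrt[4]{\beta}=1/2=8\beta\in\mathbb{Z}[\beta]$, a counterexample with $n=2$.
\end{itemize}
In general, for $\beta=1/m$ one has $\sqrt[2n]{\beta}\in\mathbb{Z}[\beta]$ exactly when $m$ is a perfect $(2n)$-th power. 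The paper's proof of the lemma hides this by simply asserting that $\beta$ is irrational.

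The statement therefore needs the extra hypothesis that $ax^2+bx-1$ is irreducible over $\mathbb{Q}$, i.e.\ $\beta\notin\mathbb{Q}$. Under it the lemma holds, even in the stronger form $\sqrt{\beta}\notin\mathbb{Q}(\beta)$:
\begin{itemize}
\item The other root $\beta'$ satisfies $\beta\beta'=-1/a<0$, so $\beta'<0$.
\item Suppose $\gamma^2=\beta$ with $\gamma\in\mathbb{Q}(\beta)\subset\mathbb{R}$. Applying the nontrivial automorphism $\sigma$ of $\mathbb{Q}(\beta)$ gives $\sigma(\gamma)^2=\beta'<0$.
\item But $\sigma(\gamma)$ is real, so this is impossible.
\end{itemize}
Alternatively, the paper's coefficient comparison also works once $c,d$ are allowed to be rational. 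That is what reducing modulo $\beta^2=\frac{1}{a}-\frac{b}{a}\beta$ actually produces; integrality of $c,d$ is not justified when $a>1$. What the comparison genuinely needs is the irrationality of $\beta$. With that hypothesis added, your argument goes through verbatim.
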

\begin{proof}
$\mathbb{Z}[\beta]$ is closed multiplicatively, so if $\sqrt[2n]{\beta}$ was in $\mathbb{Z}[\beta]$, that would imply $(\sqrt[2n]{\beta})^n = \sqrt{\beta}$ is in $\mathbb{Z}[\beta]$, which we know is not true by \ref{rootexclude}.
\end{proof}

Consider the tree pair representation formed from the non-coprime power subdivision polynomial $P_1=P(x)=a_n x^{kn} + a_{n-1} x^{k(n-1)} +...+ a_1 x^k -1$, where $k \in \mathbb{N}$. Based on \ref{wintree}, we would expect carets in this tree-pair representation to have $a_n$ legs of length $k_n$, $a_{n-1}$ legs of length $k(n-1)$ and so on. We now consider the polynomial $P_2=P(\sqrt[k]{x}) = a_n x^n + a_{n-1} x^{n-1} + ... + a_1 x -1$. The carets in the tree-pair representation for this polynomial will have $a_n$ legs of length $n$, $a_{n-1}$ legs of length $n-1$ and so on. From here, we can consider the map $i$ from the set of carets for $P(x)$ to the set of carets for $P(\sqrt[k]{x})$, which maps each caret to the caret obtained by dividing the length of each leg by $k$. We can determine the number of possible carets in the caret set for $P(x)$ by determining the possible distinct orderings for the set of legs each caret has, which is $a_n$ legs of length $kn$, $a_{n-1}$ legs of $k_{n-1}$, and so on. Similarly, the set of possible carets for $P(\sqrt[k]{x})$ is determined by the possible orderings of $a_n$ carets of length $n$, $a_n-1$ legs of length $n-1$ etc. As such, the set of legs being ordered for $P(x)$ consists of the same number of subsets $S[P_1]_i$ containing legs of the same length as the set of legs being ordered for $P(\sqrt[k]{x})$, and these subsets can be ordered in such a way that $|S[P_1]_i|=|S[P_2]_i|$ for all $i$. Thus the number of possible orderings for legs on the $P(x)$ carets is the same as the number of possible orderings for the $P(\sqrt[k]{x})$, so the map is surjective, and if two carets in $P(x)$ are mapped to the same caret in $P(\sqrt[k]{x})$, then they must have the same ordering of legs, and are thus the same caret, and so the map is injective. 
\par
We now induce a map $i^*$  to tree pairs in the tree pair representation of $P(x)$ by applying $i$ to each caret in the tree pair individually, preserving adjacency between carets. The path to each leaf from the root of the tree can be broken down into legs for carets, and if $i^*$ applied to tree pairs just applies the map $i$ to each caret in the tree, then the depth of each leaf will be mapped from $n$ to $\frac{n}{k}$.
\par
We consider the path from the root of a tree to an arbitrary leaf in a tree pair diagram in the Bieri-Strebel group with $P(x)$ as subdivision polynomial. each leg in this path is of length $k*b$ for some $1 \leq b \leq n$. Thus the full path can be expressed as $\Sigma_{i=1}^m k*b_i$ where $m$ is the number of legs in the path. When we apply $i*$ to this tree pair, each leg in this path is mapped from a leg of length $k*b_i$ to a leg of length $b_i$. Thus the length of this path is just $\Sigma_{i=1}^m b_i$. As each $b_i$ is a natural number, so is $\Sigma_{i=1}^m b_i$, thus the length of the path from the root to the leaf is a natural number, so the depth of each leaf must be a natural number.
\par
Consider now the preimage under $i^*$ of a tree pair $[T_1,T_2]$ in the space of tree pair diagrams for $P(\sqrt[k]{x})$. The depth of the $i$th leaf in $[T_1,T_2]$ has depth $d_i$ and so any tree pair $[T'_1,T'_2]$ in the preimage of $[T_1,T_2]$ under $i$ must be such that the depth of the $i$th leaf is of depth $k*d_i$. The equivalence relation between tree pair relations (in particular the caret relations) dictate that two tree pairs are in the same equivalence class if they have the same leaf depths in the same order, and so the map $i^*$ must map equivalence classes to equivalence classes and cannot map two different equivalence classes into the same equivalence class. We can see that it is surjective by considering the map $i_*$, that multiplies each leaf depth by $k$. Clearly $i^* i_*$ is the identity map on the tree-pair representation for $P(\sqrt[k]{x})$, so  for each $[T,T]$ in the tree pair representation for $P(\sqrt[k]{x})$ there must be a tree pair $[T',T']$ in the tree pair representation for $P(x)$ such that $i_*([T,T])=[T',T']$, $i^*([T',T'])= [T,T]$, and hence $i^*$ is surjective. 

We can also see that $i^*$ is a homomorphism with regard to tree pair composition. We can see this by performing tree-pair composition simultaneously in $P(x)$ and $P(\sqrt[k]{x})$. Whenever we add a caret $c$ while working in $P(x)$, we add $i(c)$ to the same leaf in $P(\sqrt[k]{x})$. Thus $i^*$ is an isomorphism between the equivalence classes in the tree-pair representations for $P(x)$ and $P(\sqrt[k]{x})$. This leads us to the following lemma:

\begin{lemma}\label{treepairdrop}
Consider $\beta$ as the unique positive root of $P(x) = a_n x^n +...+ a_1 x -1$ and $\sqrt[k]{\beta}$ as the root of $P(x^k) = a_n x^{kn} + ... + a_1 x^k -1$. Suppose the tree pair representation based of $P(x)$ is a well-defined tree pair representation for \FN{\beta}. Either \FN{\sqrt[k]{\beta}} $\cong$ \FN{\beta}, or the tree-pair based on $P(x^k)$ is not a well-defined tree-pair representation for \FN{\sqrt[k]{\beta}}. 
\end{lemma}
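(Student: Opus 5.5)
We will deduce the lemma from the isomorphism $i^*$ constructed in the discussion preceding the statement. Suppose the tree-pair representation based on $P(x^k)$ is well-defined for \FN{\sqrt[k]{\beta}}, meaning its equivalence classes of tree pairs, under composition, form a group isomorphic to \FN{\sqrt[k]{\beta}}. The aim is then to show \FN{\sqrt[k]{\beta}} $\cong$ \FN{\beta}, which proves the dichotomy.

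The first step is to restate what $i^*$ gives us. The map $i$ from carets for $P(x^k)$ to carets for $P(x)$ divides each leg length by $k$, and it is a bijection because the multisets of leg lengths correspond block by block. The induced map $i^*$ divides every leaf depth by $k$. It respects the caret relations, since two trees are equivalent exactly when they have the same ordered sequence of leaf depths. It is surjective with inverse $i_*$, which multiplies depths by $k$, and it commutes with adding carets, hence with composition. So $i^*$ is a group isomorphism between the group of equivalence classes of tree pairs built from $P(x^k)$ and the group built from $P(x)$.

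Next, compose with the two hypotheses. The representation based on $P(x)$ is well-defined by assumption, so the group built from $P(x)$ is isomorphic to \FN{\beta}. If the representation based on $P(x^k)$ is also well-defined, its group is isomorphic to \FN{\sqrt[k]{\beta}}. Chaining the three isomorphisms gives \FN{\sqrt[k]{\beta}} $\cong$ \FN{\beta}. Contrapositively, if \FN{\sqrt[k]{\beta}} $\not\cong$ \FN{\beta}, the $P(x^k)$ representation cannot be well-defined.

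The main obstacle is the well-definedness of $i^*$ on equivalence classes. Pairs of trees for $P(x^k)$ are equivalent when they differ by redundant caret pairs and caret relations. Caret relations preserve leaf-depth sequences, so they cause no trouble. Redundant caret pairs map under $i$ to redundant caret pairs, so $i^*$ sends related pairs to related pairs. Injectivity on classes needs the converse: if $i^*[T_1',T_2']=i^*[S_1',S_2']$, the relating moves in the $P(x)$ picture must lift through $i^{-1}$ caret by caret. This holds because $i$ is a bijection on carets that commutes with attaching a caret to a leaf. I would check these points carefully, and I would phrase everything through the formal tree-pair groups rather than through the PL-homeomorphism interpretation. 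This matters because leaf depths $kd$ correspond to lengths $\sqrt[k]{\beta}^{kd}=\beta^d$, so the actual homeomorphisms coincide and no subtlety about slopes arises.
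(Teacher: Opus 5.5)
Your proposal is correct and follows the paper's own route: the leg-rescaling bijection $i$ on carets, the induced map $i^*$ shown to respect equivalence classes (redundant caret pairs and leaf-depth sequences) and composition, and then chaining the isomorphisms between $\textbf{F}_{\sqrt[k]{\beta}}$, the $P(x^k)$ tree-pair group, the $P(x)$ tree-pair group, and $\textbf{F}_\beta$. Your closing remark deserves more weight than you give it. Since $(\sqrt[k]{\beta})^{kd}=\beta^d$, corresponding tree pairs define literally the same homeomorphism, so $P(x^k)$ tree pairs can only represent elements of $\textbf{F}_\beta$. This gives equality $\textbf{F}_{\sqrt[k]{\beta}}=\textbf{F}_\beta$ rather than mere isomorphism, which is what the paper actually needs when it later exhibits an element of $\textbf{F}_{\sqrt{\beta}}\setminus\textbf{F}_\beta$.
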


\begin{proof}[Proof of theorem \ref{roothm}]
To prove \ref{roothm}, we wish to construct an element we know is in \FN{\sqrt[2n]{\beta}} but cannot be in \FN{\beta}. We will construct an element that is in \FN{\sqrt{\beta}}, and by a similar argument to the proof of \ref{rootcor}, must be in \FN{\sqrt[2n]{\beta}} as well.
\par
We first create a partition pair which we know cannot be in \FN{\beta}. From \ref{rootexclude}, we know that if a partition pair contains $\sqrt{\beta}$ as a nontrivial breakpoint, then that partition pair cannot be an element of \FN{\beta}. From here, the simplest solution would be to have $\sqrt{\beta}$ as the sole breakpoint in the first partition and then have $1-\sqrt{\beta}$ as the sole breakpoint of the second partition. The issue with this is that the resulting map would map an interval of length $\sqrt{\beta}$ to an interval of length $1-\sqrt{\beta}$. This would require $\frac{1-\sqrt{\beta}}{\sqrt{\beta}}$ to be in $\langle \sqrt{\beta} \rangle$, which is challenging to prove. Instead, we will construct a partition pair that has only slopes in $\langle \beta \rangle$. Since $\langle \beta \rangle \subseteq \langle \sqrt{\beta} \rangle$, we can then guarantee our partition pair is in \FN{\sqrt{\beta}}.
\par
We create our partition pair in the following way. First we take two copies of the interval with $\sqrt{\beta}$ as the sole breakpoint. From here, we subdivide the interval of length $\sqrt{\beta}$ in each partition in different ways. Each interval will now be divided into $a$ intervals of length $\beta^2 * \sqrt{\beta}$ and $b$ intervals of length $\beta * \sqrt{\beta}$, where $\beta$ is the root of $ax^2+bx-1$, implying 

\begin{equation}
\begin{aligned}
a \beta^2 + b \beta & = 1 \\
a \beta^2 *\sqrt{\beta} + b \beta * \sqrt{\beta} & = \sqrt{\beta}
\end{aligned}
\end{equation}
\par
The ordering of the subintervals in the partitions is arbitrary, aside from 2 points. First, the sole interval of length $1-\sqrt{\beta}$ must be the $i$th interval in each partition for some $i$, and the partitions must not be identical. 

\begin{center}
\begin{figure}[H]
\centering
\begin{tikzpicture}[scale=8]
    \draw[thick] (1,0) -- (0,0);
    \draw[thick] (0,-0.2) -- (1, -0.2);
    \draw[thick] (0, 0.03) -- (0, -0.03);
    \draw[thick] (1,-0.03) -- (1, 0.03);
    \draw[thick] (0, -0.23) -- (0,-0.17);
    \draw[thick] (1, -0.23) -- (1, -0.17);
    \draw[thick] (0.8, 0.02) -- (0.8, -0.02);
    \draw[thick] (0.8, -0.22) -- (0.8, -0.18);
    \draw[thick] (0.5, 0.02) -- (0.5, -0.02);
    \draw[thick] (0.3, -0.22) -- (0.3, -0.18);
    \filldraw (0,0.03) circle (0pt) node[anchor=south]{$0$};
    \filldraw (1,0.03) circle (0pt) node[anchor=south]{$1$};
    \filldraw (0,-0.23) circle (0pt) node[anchor=north]{$0$};
    \filldraw (1,-0.23) circle (0pt) node[anchor=north]{$1$};
    \filldraw (0.8,-0.23) circle (0pt) node[anchor=north]{$\sqrt{\tau}$};
    \filldraw (0.8,0.03) circle (0pt) node[anchor=south]{$\sqrt{\tau}$};
    \filldraw (0.5,0.03) circle (0pt) node[anchor=south]{$\tau^{\frac{3}{2}}$};
    \filldraw (0.3,-0.23) circle (0pt) node[anchor=north]{$\tau^{\frac{5}{2}}$};
    \node (A) at (0.25,0) {};
    \node (B) at (0.65,0) {};
    \node (C) at (0.9,0) {};
    \node (D) at (0.15,-0.2) {};
    \node (E) at (0.55,-0.2) {};
    \node (F) at (0.9,-0.2) {};
    \draw[->](A)--(D);
    \draw[->](B)--(E);
    \draw[->](C)--(F);

\end{tikzpicture}
    
\caption{An example of a partition pair in \FN{\sqrt{\tau}} with the breakpoint $\sqrt{\tau}$ excluding it from \FN{\tau}. The slopes in this map have gradients $\tau$, $\tau^{-1}$ and $1$ from left to right}\label{roottau}
\end{figure}
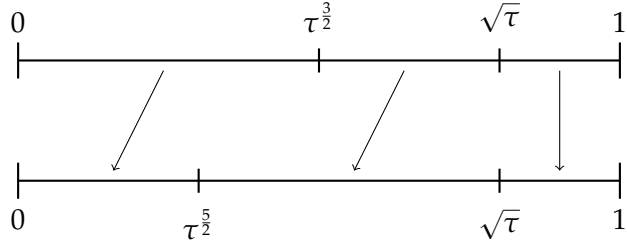
\end{center}

As seen in the example \ref{roottau}, maps constructed in this way have gradients  in $\langle \sqrt{\beta} \rangle$ (indeed, they are in $\langle \beta \rangle$), but as they have $\sqrt{\beta}$ as a breakpoint, these maps are in \FN{\sqrt{\beta}} but not in \FN{\beta}. Thus, we can conclude that \FN{\beta} is not the same group as \FN{\sqrt{\beta}} and so the tree pair representation based on $P(x^2)$ is not a well-defined tree pair representation for \FN{\sqrt{\beta}} by \ref{treepairdrop}. Similarly, this element is in \FN{\sqrt[2n]{\beta}}, and so $P(x^{2n})$ is not a well-defined tree pair representation for \FN{\sqrt[2n]{\beta}}.
\par
Finally, we need to preclude the possibility of a different tree pair representation. For this we rely on our example to \ref{rootfind}. What we have shown with the example (and its generalisation) is that we can form an interval in a partition in \FN{\sqrt{\beta}} with a length that cannot be expressed in the form $a_4 \beta^{k+4} + a_3 \beta{k+3} + a_2 \beta^{k+2} + a_1 \beta^{k+1}$ via repeated subdivision (the substitution that multiplication by the matrix $A$ represents). However, were we constructing a tree to represent this partition, such an interval would have to be expressed in this way. As such, these intervals cannot be expressed as part of a tree pair presentation and thus we have our proof that Bieri-Strebel groups with associated polynomial $ax^4+bx^2-1$ cannot have a well defined tree-pair representation. Combining this with \ref{rootcor} is our proof of \ref{roothm}.

\end{proof}
\par
While this seems challenging to generalise in its entirety, we would like to offer this conjecture regarding non-coprime power polynomials:

\begin{conjecture}
The Bieri-Strebel group with associated subdivision polynomial $a_{n}x^{kn} + a_{n-1}x^{k(n-1)}+...+a_1x^k -1, k>1$ with not all $a_i=0$ does not have a well-defined tree-pair representation.
\end{conjecture} 

\printbibliography[heading=bibintoc]

\end{document}